\def\tsc#1{\csdef{#1}{\textsc{\lowercase{#1}}\xspace}}
\newcommand{\mathleft}{\@fleqntrue\@mathmargin0pt}
\newcommand{\mathcenter}{\@fleqnfalse}
\newtheorem{theorem}{Theorem}
\newtheorem{prop}{Proposition}
\newtheorem{definition}{Definition}
\newtheorem{assump}{Assumption}
\newtheorem{lemma}{Lemma}
\newdefinition{rmk}{Remark}
\newdefinition{cor}{Corollary}
\newtheorem{prob}{Problem formulation}
\DeclareMathOperator*{\argmin}{arg\,min}
\begin{document}
\let\WriteBookmarks\relax
\def\floatpagepagefraction{1}
\def\textpagefraction{.001}

\shorttitle{RL for Inverse LQ Dynamic Games}    

\shortauthors{E. Martirosyan, M. Cao}  

\title [mode = title]{Reinforcement Learning for Inverse Linear-quadratic Dynamic Non-cooperative  Games}  



%

\author[]{E. Martirosyan}[style=chinese]






\affiliation[]{organization={Engineering and Technology Institute Groningen, University of Groningen},
            addressline={Nijenborgh 4}, 
            city={Groningen},
            postcode={9712CP}, 
            state={},
            country={Netherlands}}

\author[]{M. Cao}[style=chinese]








\begin{abstract}
In this paper, we address the inverse problem in the case of linear-quadratic discrete-time dynamic non-cooperative games.  Given feedback laws of players that are known to be a Nash equilibrium pair for a discrete-time linear system, we want find cost function parameters for which the observed feedback laws are optimal and stabilizing. Using the given feedback laws, we introduce a model-based algorithm that generates cost function parameters solving the problem. We provide theoretical results that guarantee the convergence and stability of the algorithm as well as the way to generate new games with necessary properties without requiring to run the complete algorithm repeatedly . Then the algorithm is extended to a model-free version that uses data samples generated by unknown dynamics and has the same properties as the model-based version. Simulation results validate the effectiveness of the proposed algorithms.
\end{abstract}



\begin{keywords}
\sep Inverse differential games \sep Inverse optimal control \sep Reinforcement learning \sep Discrete-time linear systems
\end{keywords}

\maketitle

\section{Introduction}

Game theory involves multi-person decision making. It is dynamic if the order in which the decisions are made is important, and it is non-cooperative if each person involved pursues his or her own interests which are partly conflicting with others' \cite{bacsar1998dynamic}. Introduction of differential games in \cite{isaacs_differential_1965} (the word differential refers to dynamic games with continuous-time dynamics), attracted significant attention due to its connection to the optimal control theory. Linear-quadratic (LQ) non-cooperative games were shown to be suitable for modeling human-machine interaction \cite{flad2017cooperative}, \cite{na2014game}. Also, dynamic games found its application in collision avoidance \cite{mylvaganam2017differential} and formation control problems \cite{gu2007differential}. Although most of the works focus on finding an optimal behavior for a given game, recently a significant attention emerged to the problems where given a desired behavior, the goal is to construct a cost function for which that behavior is optimal. For example, the inverse problems in game theoretical setting were studied for human behavior identification during interaction with an automatic controller \cite{rothfuss2017inverse} or identification of biological systems behavior \cite{molloy2018inverse}. 

Inverse optimal control (IOC) is a field with a long history with one of the earliest work published in 1966 \cite{anderson1966inverse}. The focus of IOC is to develop mathematical models and algorithms for inferring the objectives and constraints of a system in view of observed behavior. Another, closely related field is inverse reinforcement learning (IRL) \cite{ng2000algorithms} where the same task is accomplished but in the framework of Markov decision process. There already has been done a significant work on the inverse dynamic games. Some of them use purely IRL approaches, while others are based on IOC. For example, \cite{kopf2017inverse} solves finite-time LQ using an extended version of maximum entropy IRL \cite{ziebart2008maximum}. While in \cite{molloy2017inverse} and \cite{molloy2019inverse}, establishing connection between inverse optimal control and inverse dynamic games, solution to the inverse open-loop differential is provided given the system dynamics and the knowledge of the basis functions that are used to parameterize the cost functions of the players. In \cite{rothfuss2017inverse}, a model-based solution to the problem of identifying the cost function for one of the player. \cite{inga2019solution} solves the problem in the infinite horizon setting assuming the knowledge of the dynamics. More results have been established in the areas of imitation problems/apprentice games  \cite{lian2022data, lian2021inverse, lian2022inverse, lian2023off, xue2023inverse} which are fundamentally similar to the inverse dynamic game problems. Since the dynamic games are closely related to optimal control problems, useful results might be found in the literature dedicated to tracking and inverse optimal control problems which consider a single control input systems (or a single object of optimization, i.e., cost function)  \cite{xue2021inversec}, \cite{xue2021inverse}. Most of the mentioned works deal with continuous time systems. In our work, the model-based algorithm that finds the cost function for all the players in the case of the discrete-time games is established. Then, the algorithm is extended to the model free setting assuming that instead of game's dynamics, some trajectories generated by the unknown dynamics can be observed. We also show how an unlimited amount of games can be generated without the require to reuse the algorithm. 

The paper is structured as follows. Section \ref{PF} shows preliminary results on LQ non-cooperative $N$-player dynamic games and formulates the problem addressed in the paper. In section \ref{MB}, we describe each step of the model-based algorithm and describe its analytical properties as well as characterize possible solutions of the inverse problem. Section \ref{MF} extends the model-based algorithm to a model-free version that allows to solve the problem without using the system dynamics. In section \ref{SIMS}, we provide simulation results that validate the effectiveness of the proposed algorithms. Finally, section \ref{CON} concludes the paper with suggestions on the possible future research.








\section{Problem formulation}\label{PF}
In this section, LQ non-cooperative discrete time dynamic games are introduced and stationary linear feedback Nash equilibrium is defined. We clarify what an optimal behavior for the game is and introduce the inverse problem. To keep it clear, we provide used notations below. 

\textit{Notations}: For a matrix $P\in\mathbb{R}^{m\times n}$, $P^k$, $P^{(k)}$ denote $P$ to the power of $k$, and matrix $P$ at the $k$-th iteration, respectively. In addition, $P>0$ and $P\geq0$, denote positive definiteness and positive semi-definiteness of matrix $P$, respectively. $I_k$ and $\mathbf{0}_k$ is the $k\times k$ identity matrix and $k\times k$ zero matrix, respectively. $\mathcal{N}=\{1,\dots,N\}$ denotes the set of $N$ players. The notation $(u_i,u_{-i})$ and $(K_i,K_{-i})$ denote a control input and feedback law profile, respectively; $u_i$ and $K_i$ are the control input and feedback law of player $i$, respectively, and $u_{-i}$ and $K_{-i}$ are the sets of control inputs and feedback laws of the rest $N-1$ players, respectively.

\subsection{LQ discrete time dynamic games}
Consider a differential game with discrete-time system dynamics with $N$ players
\begin{align}\label{sysdyn}
    x(k+1) &= A x(k) + \sum_{j=1}^N B_i u_i(k),
\end{align}
where $x(k)\in\mathbb{R}^n$ and $u_i(k)\in\mathbb{R}^{m_i}$ is a control input of player $i\in\mathcal{N}$; plant matrix $A$, control input matrices $B_i$ have appropriate dimensions. For the given discrete-time system, we make the following assumption.
\begin{assump}
The system \eqref{sysdyn} is stabilizable, i.e., there exists a control sequence that stabilizes the system asymptotically.
\end{assump}
We consider that the players select their control to be linear time-invariant feedback laws of the form 
\begin{equation}\label{fbl}
u_i(k) = - K_i x(k),\quad i\in\mathcal{N}.
\end{equation}

From this end, to ease the readability of the paper, we do not write "for $i\in\mathcal{N}$" to every mathematical expression that is valid for every player. All the equations, inequalities and update rules written further imply \textit{that} unless something specific is mentioned. 

In the game, we restrict the admissible controllers to belong to the following set 
\begin{align}\label{stabset}
    \begin{split}
        \mathcal{K} = \{(K_1,\dots,K_N) | \sigma(A - \sum_{j=1}^N B_j K_j) | \in \mathbb{C}_d\},
    \end{split}
\end{align}
   where $\sigma(\cdot)$ and $\mathbb{C}_d$ denote the set of eigenvalues and set $\{\lambda\in\mathbb{C}: |\lambda< 1|\}$, respectively. Control inputs $(u_1,\dots,u_N)$ need to stabilize the system to qualify as NE in this game \cite{engwerda_lq_2005}. This restriction is essential because, as shown in \cite{mage76}, without this restriction it is possible to provide an example where a non-stabilizing feedback yields lower cost for one of the player while another player sticks to the stabilizing feedback law. 

The cost function of each player $i$ is
\begin{align}\label{costfun}
\begin{split}
    & J_i(x(k), u_i(k),u_{-i}(k)) = \sum_{t=k}^\infty r_i(x(k),u_i(k),u_{-i}(k))\\
    & = \sum_{t=k}^\infty (x^\top(t) Q_i x(t) + \sum_{j=1}^N u_j^\top(t) R_{ij} u_j(t)),
\end{split}
\end{align}
where $Q_i\in\mathbb{R}^{n\times n}$, $R_{ij}\in\mathbb{R}^{m_j\times m_j}$ are symmetric (often assumed positive semi-definite matrices \cite{bacsar1998dynamic}) and $R_{ii}\in\mathbb{R}^{m_i\times m_i}$ are symmetric positive definite matrices. Within the game, each player aims to minimize its own cost function, i.e., find a controller $u_i^* = \argmin J_i(x(k),u_i(k),u_{-i}^*(k))$ where $u_{-i}^*$ is a profile of minimizer for the rest $N-1$ players. The solution of the game is a stabilizable tuple $(u_i^*,u_{-i}^*)$ (or, due to \eqref{fbl}, $(K_{i}^*,K_{-i}^*)$, equivalently) which is called Nash Equilibrium (NE) and satisfies 
\begin{align}
    J_i(x(k),u_i^*(k),u_{-i}^*(k)) \leq J_i(x(k),u_i(k),u_{-i}^*(k)),
\end{align}
where $(u_i,u_{-i}^*)$ is generated by the set of feedback laws belonging to the set of admissible controllers \eqref{stabset}.

The value function for player $i$ is defined as 
\begin{align}\label{belleq}
    \begin{split}
        V_i(x(k)) &= \sum_{t=k}^\infty r_i(x(t),u_i(t),u_{-i}(t))\\
        & = r_i(x(k),u_i(k),u_{-i}(k)) + V_i(x(k+1)),
    \end{split}
\end{align}
which for linear systems is set as $V_i(x(k)) = x^\top (k) P_i x(k)$ where $P_i > 0$ \cite{Lewis2012}. Then, the Bellman's optimality equation for the game is given by
\begin{align}\label{belman}
    \begin{split}
        & V_i^*(x(k)) = \min_{u_i} \sum_{t=k}^\infty r_i(x(t),u_i(t),u_{-i}(t))\\
        & = \min_{u_i} \big( r_i(x(k),u_i(k),u_{-i}(k)) + V_i^*(x(k+1)) \big).
    \end{split}
\end{align} 

According to \cite{Vrabie2012}, the Hamiltonian corresponding to each player $i$ is defined as follows 
\begin{align}
\begin{split}
    & H_i(x(k), u_i(k),u_{-i}(k),\Delta V_i(k)) =  \\
    & = r_i(x(k),u_i(k),u_{-i}(k)) + \Delta V_i(k),
\end{split}
\end{align}
where 
\begin{align}
    \begin{split}
        & \Delta V_i(k)) = V_i(x(k+1)) - V_i(x(k)),\\
        & r_i(x(k),u_i(k),u_{-i}(k)) = \\
        & x^\top(k) Q_i x(k) + \sum_{j=1}^N u_j^\top (k) R_{ij} u_j(k).
    \end{split}
\end{align}

From the stationarity condition $\frac{\partial H_i}{\partial u_i} = 0$, one can derive the minimizing control $u_i^*$ input for player $i$ given as
\mathleft
\begin{align}
	\begin{split}
    u_i^*(k) = - (R_{ii} + B_i^\top P_i B_i)^{-1} B_i^\top P_i (A x(k) + \sum_{\substack{j\neq i}}^N B_j u_j^*(k)).
	\end{split}
\end{align}
Considering \eqref{fbl}, one concludes that the optimal feedback law of player $i$ is given by
\mathcenter
\begin{align}\label{eqfb}
    \begin{split}
        K_i^* = (R_{ii} + B_i^\top P_i B_i)^{-1} B_i^\top P_i (A -  \sum_{\substack{j\neq i}}^N  B_j K_j^*).
    \end{split}
\end{align}

From \eqref{belleq}, one can derived a system of generalized algebraic Riccati equations (GAREs) for 
\begin{align}\label{GARE}
	\begin{split}
& Q_i + \sum_{j=1}^N K_j^\top R_{ij} K_j - P_i + \\
& (A - \sum_{j=1}^N B_j K_j)^\top P_i (A - \sum_{j=1}^N B_j K_j) = 0.
	\end{split}
\end{align}
The solution of the system of GAREs $(P_1,\dots,P_N)$ is used to evaluate the set of equilibrium feedback laws $(K_1,\dots,K_N)$ \eqref{eqfb}.

The above coincides with presented in \cite{monti2023feedback} and \cite{song21}, and can be summarized in the following way - if $(K_1,\dots,K_N)$ satisfy \eqref{eqfb} with $(P_1,\dots,P_N)$ being a solution in \eqref{GARE} and belong to the set in \eqref{stabset}, then we can conclude that $(K_1,\dots,K_N)$ is NE.

\subsection{The inverse problem}

This section formulates the inverse problem for LQ discrete time dynamic games. 

Consider an LQ discrete time dynamic game (referred to as the
observed game) with the following system dynamics 

\begin{align}\label{obssysdyn}
    \begin{split}
         x_o(k+1) &= A x_o(k) + \sum_{j=1}^N B_j u_{j,o}(k),
    \end{split}
\end{align}
where $x_o\in\mathbb{R}^n$ is the observed state, $u_{i,o}\in\mathbb{R}^{m_i}$ generated by $K_{i,o}$ is a control input of the player in $\mathcal{N}$. The pair tuple $(u_{1,o},\dots,u_{N,o})$ constitutes NE of a game with the cost function for each player $i\in\mathcal{N}$ is given by
\begin{align}
    \begin{split}
        & J_{i,o} (x(k),u_i(k),u_{-i}(k)) = \\
    & = \sum_{t=k}^\infty (x^\top(t) Q_{i,o} x(t) + \sum_{j=1}^N u_j^\top(t) R_{ij,o} u_j(t))
    \end{split}
\end{align}
with unknown symmetric matrices $Q_{i,o}$ and $R_{ij,o}$ are positive semi-definite; $R_{ii,o}$ is positive definite. Considering that $x_o,u_{1,o},u_{2,o}$ are NE trajectories, one has 
\begin{align}\label{targetfd}
    \begin{split}
    &u_{i,o}(k) = - K_{i,o} x_o(k) = \\ 
    &- (R_{ii} + B_i^\top P_{i,o} B_i)^{-1} B_i^\top P_{i,o} (A x(k) + \sum_{\substack{j\neq i}}^N B_j u_{j,o}(k)), 
    \end{split}
\end{align}
and $P_{i,o}$ being a solution of 
\begin{align}
    \begin{split}
        & Q_{i,o} + \sum_{j=1}^N K_{j,o}^\top R_{ij} K_{j,o} - P_{i,o} + \\
        & (A - \sum_{j=1}^N B_j K_{j,o})^\top P_{i,o} (A - \sum_{j=1}^N B_j K_{j,o}) = 0.
    \end{split}
\end{align}

\begin{definition}
    Sets of matrices $(B_1,\dots,B_N)$, $(Q_1,\dots,Q_N)$ and $(R_{11},\dots,R_{1N},\dots,R_{N1},\dots,R_{NN})$ are denoted by $\mathbf{B}, \mathbf{Q}$ and $\mathbf{R}$, respectively. A tuple $(A,\mathbf{B},\mathbf{Q},\mathbf{R})$ refers to a game with dynamics \eqref{sysdyn} and cost functions parameters \eqref{costfun}. We call two games with the same system dynamics matrices but different cost function parameters $(A,\mathbf{B},\mathbf{Q},\mathbf{R})$ and $(A,\mathbf{B},\mathbf{Q}^\prime,\mathbf{R}^\prime)$ equivalent if both games share a NE, i.e., there exists a tuple $(K_1,,\dots,K_N)$ that is a NE for both games. 
\end{definition}
\begin{prob}
    Given the matrices $A$ and $\mathbf{B}$ that constitute the system dynamics 
and a pair of NE feedback laws $(K_{1,o},\dots,K_{N,o})$, we want to derive a
game equivalent to the $(A,\mathbf{B},\mathbf{Q}_o,\mathbf{R}_o)$ game via a \textbf{model-based} algorithm.
\end{prob}

\begin{prob}
    Given a pair of NE feedback laws $(K_{1,o},\dots,K_{N,o})$, we want to derive a
game equivalent to the $(A,\mathbf{B},\mathbf{Q}_o,\mathbf{R}_o)$ game via a \textbf{model-free} algorithm.
\end{prob}

\begin{rmk}\label{rem1}
   It is known that the target feedback laws can be optimal for different sets of the cost parameters \cite{lancaster1995algebraic}. In the following sections, we will show that there can be infinitely many games that are equivalent to the observed one. 
\end{rmk}
\begin{rmk}
	In fact, instead of pair of equilibrium laws, one can solve the problem given $(x_o,u_{1,o},\dots,u_{N,o})$ NE trajectories 
 \cite{inga2019solution}. Then, a pair of NE feedback laws $(K_{1,o},\dots,K_{N,o})$ can be computed using the trajectories via an estimation procedure, e.g., the least-square method \cite{devore2008probability}. 
\end{rmk}

\section{Problem 1: Model-based algorithm}\label{MB}

In this section, a model-based algorithm is provided. For this algorithm, we assume that matrices that constitute the dynamics of the system are known, i.e., $A,B_1,\dots,B_N$ are known. The model-based algorithm is also a template for the model-free algorithm. Thus, its analytical properties are also valid for the model-free version as it is shown in the further sections. 
The algorithm can be briefly described as follows - firstly, we initialize the cost function parameters  with particular properties. Then algorithm at each iteration solves modified GAREs to compute $P_i$'s that drive $K_i$'s to the target control laws $K_{i,o}$. Then, cost function parameters are updated until a desired tolerance is reached.

\subsection{Model-based algorithm}

For some given cost function parameters, we want to compute $P_i$ such that $K_i$ computed as in \eqref{eqfb} are equal to $K_{i,o}$ for all player $i\in\mathcal{N}$. For that, we need to present the following theoretical result based on \cite{lian2022data}, \cite{lian2023off} and \cite{xue2021inverse} that is necessary for the algorithm establishment. Further, the following notation is used 
\begin{equation}
    A_i = A - \sum_{j\neq i}^N B_j K_{j,o}\,.
\end{equation} 
\begin{lemma}\label{condlem}
If $P_i > 0$ satisfies 
	\begin{align}\label{modGARE}
	\begin{split}
	& Q_i + \sum_{j=1}^N K_{j,o}^\top R_{ij} K_{j,o} - P_i + \\
 & (A_i - B_i K_{i,o})^\top P_i (A_i - B_i K_{i,o}) = 0,
	\end{split}
	\end{align}
and
	\begin{align}\label{modaGARE1}
	\begin{split}
	& Q_i + \sum_{j\neq i} K_{j,o}^\top R_{ij} K_{j,o} + K_i^\top R_{ii} K_i - P_i + \\
 & (A_i - B_i K_i)^\top P_i (A_i - B_i K_i) = 0
	\end{split}
	\end{align}
with $R_{ii} > 0$ for $i\in\mathcal{N}$, then 
\begin{align}\label{eqfb1}
    \begin{split}
        K_i = (R_{ii} + B_i^\top P_i B_i)^{-1} B_i^\top P_i A_i = K_{i,o}\,.
    \end{split}
\end{align}
\end{lemma}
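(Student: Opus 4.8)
The plan is to exploit the fact that the two matrix equations \eqref{modGARE} and \eqref{modaGARE1} share every term except the one built from player $i$'s own feedback law, and that the gain $K_i$ prescribed by \eqref{eqfb1} is precisely the minimizer of that differing term. First I would isolate the quadratic matrix function
\[
g(K) = K^\top R_{ii} K + (A_i - B_i K)^\top P_i (A_i - B_i K),
\]
which is exactly the part of each Riccati-type equation that depends on the own gain. Expanding the product and collecting powers of $K$ shows that $g$ is quadratic in $K$ with Hessian coefficient $R_{ii} + B_i^\top P_i B_i$, so completing the square yields
\[
g(K) = (K - \widehat K)^\top (R_{ii} + B_i^\top P_i B_i)(K - \widehat K) + g(\widehat K),
\]
where $\widehat K = (R_{ii} + B_i^\top P_i B_i)^{-1} B_i^\top P_i A_i$. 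The key observation is that $\widehat K$ is exactly the gain defined in \eqref{eqfb1}, so $\widehat K = K_i$ and $g$ attains its unique minimum there.

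Next I would rewrite both hypotheses in terms of $g$. Splitting the cost sum as $\sum_{j=1}^N K_{j,o}^\top R_{ij} K_{j,o} = \sum_{j\neq i} K_{j,o}^\top R_{ij} K_{j,o} + K_{i,o}^\top R_{ii} K_{i,o}$ and recognizing the remaining own-gain terms, equation \eqref{modGARE} becomes $Q_i + \sum_{j\neq i} K_{j,o}^\top R_{ij} K_{j,o} - P_i + g(K_{i,o}) = 0$, while \eqref{modaGARE1} is already of the form $Q_i + \sum_{j\neq i} K_{j,o}^\top R_{ij} K_{j,o} - P_i + g(K_i) = 0$. Subtracting these two identities cancels every common term and leaves the single relation $g(K_{i,o}) = g(K_i)$.

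Finally I would combine this equality with the completed-square form. Since $K_i = \widehat K$, evaluating the square at $K = K_{i,o}$ gives $g(K_{i,o}) - g(K_i) = (K_{i,o} - K_i)^\top (R_{ii} + B_i^\top P_i B_i)(K_{i,o} - K_i)$, so the subtraction result forces this matrix quadratic form to vanish. Here the hypotheses $R_{ii} > 0$ and $P_i > 0$ enter: together they make $R_{ii} + B_i^\top P_i B_i$ positive definite, hence it admits an invertible square root $M^{1/2}$, and $(K_{i,o}-K_i)^\top (R_{ii}+B_i^\top P_i B_i)(K_{i,o}-K_i) = 0$ rewrites as $\bigl(M^{1/2}(K_{i,o}-K_i)\bigr)^\top \bigl(M^{1/2}(K_{i,o}-K_i)\bigr) = 0$, which forces $M^{1/2}(K_{i,o}-K_i) = 0$ and therefore $K_i = K_{i,o}$. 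I expect the only delicate points to be the matrix-algebra bookkeeping in the completion of the square, and the mild subtlety that $K$ is a rectangular matrix, so one must argue that a vanishing matrix quadratic form implies the matrix itself is zero rather than merely that some scalar vanishes; everything else is pure cancellation between the two equations.
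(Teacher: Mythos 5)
Your proof is correct and follows essentially the same route as the paper: both subtract \eqref{modGARE} from \eqref{modaGARE1}, invoke the gain formula $K_i = (R_{ii} + B_i^\top P_i B_i)^{-1} B_i^\top P_i A_i$, reduce the difference to the single quadratic form $(K_i - K_{i,o})^\top (R_{ii} + B_i^\top P_i B_i)(K_i - K_{i,o}) = 0$, and conclude from positive definiteness. Your completion-of-the-square via $g(K)$ is merely a tidier packaging of the paper's expand-and-regroup computation, and your explicit square-root argument for why the vanishing matrix quadratic form forces $K_i = K_{i,o}$ fills in a detail the paper states without justification.
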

\begin{proof}
Subtracting \eqref{modGARE} from \eqref{modaGARE1}, one has 
\begin{align}
	\begin{split}
  		& 0 = K_i^\top R_{ii} K_i - K_{i,o}^\top R_{ii} K_{i,o} + \\ 
  		& (A_i - B_i K_i)^\top P_i(A_i - B_i K_i) - \\
            &(A_i - B_i K_{i,o})^\top P_i (A_i - B_i K_{i,o}) = 0.
	\end{split}
\end{align}
which, after opening the brackets, gives 
	\begin{align}\label{pr1GARE}
		\begin{split}
		& K_i^\top R_{ii} K_i - K_{i,o}^\top R_{ii} K_{i,o} + K_i^\top B_i^\top P_i B_i K_i - \\
        & A_i^\top P_i B_i K_i - K_i^\top B_i^\top P_i A_i  - K_{i,o}^\top B_i^\top P_i B_i K_{i,o} + \\
        & A_i^\top P_i B_i K_{i,o} + K_{i,o}^\top B_i^\top P_i A_i = 0.
		\end{split}
	\end{align}
Next, from \eqref{eqfb1} multiplying both sides by positive definite matrix $(R_{ii} + B_i^\top P_i B_i)$, one gets
\begin{align}
	\begin{split}
		(R_{ii} + B_i^\top P_i B_i) K_i  = B_i^\top P_i A_i\,.
	\end{split}
\end{align}
Substituting the above into \eqref{pr1GARE} and grouping the terms, one gets
\begin{align}
 	\begin{split}
 		& - K_{i,o}^\top (R_{ii} + B_i^\top P_i B_i) K_{i,o} - K_i^\top (R_{ii} + B_i^\top P_i B_i) K_i + \\ 
		& K_i^\top (R_{ii} + B_i^\top P_i B_i) K_{i,o} + K_{i,o}^\top (R_{ii} + B_i^\top P_i B_i) K_i = 0,
 	\end{split}
\end{align}
which further can be rewritten as 
\begin{align}
	(K_i - K_{i,o}) (R_{ii} + B_i^\top P_i B_i) (K_i - K_{i,o}) = 0.
\end{align}
Since $R_{ii} + B_i^\top P_i B_i$ is positive definite, the above equality holds only if $K_i = K_{i,o}$ which completes the proof.
\end{proof}

With the above Lemma, the algorithm can be established. We solve 
\eqref{modGARE} to drive $P_i$ and, as a result $K_i$, in the direction $K_{i,o}$, and \eqref{modaGARE1} is used to construct the inverse optimal control update of $Q_i$. It starts with initializing the cost functions parameters. We initialize symmetric $Q_i^{(0)} \geq 0$ and $R_{ij} \geq 0$ with $R_{ii} > 0$. Note that the parameters $R_{ij}$ remain unchanged during the iterative procedure because one can scale only $Q_i$ parameters to construct the cost function with desired properties. 

Setting the iteration counter $s=0,1,2,\dots$, one solves the modified GAREs 
	\begin{align}\label{upd1}
	\begin{split}
	&Q_i^{(s)} + \sum_{j=1}^N K_{j,o}^\top R_{ij} K_{j,o} - P_i^{(s+1)} + \\
    &(A_i - B_i K_{i,o})^\top P_i^{(s+1)} (A_i - B_i K_{i,o}) = 0.
	\end{split}
	\end{align}
The above equations for $i\in\mathcal{N}$ constitute a set of discrete Lyapunov (Stein) equations. Each of the equations is guaranteed to have a positive definite solution if
\begin{equation}
   Q_i^{(s)} + \sum_{j=1}^N K_j^{(s)\top} R_{ij} K_j^{(s)} > 0
\end{equation}
and $A_i - B_i K_{i,o}$ is stable. Thus, at each iteration we obtain a unique tuple of positive definite matrices $(P_1^{(s+1)},\dots,P_N^{(s+1)})$.

Then, following the condition in Lemma \ref{condlem}, $Q_i^{(s+1)}$ is updated in the direction of the optimal value for player $i$. Using \eqref{eqfb1}, \eqref{modaGARE1} is rewritten as 
\begin{align}\label{rewrittenmodGare1}
	\begin{split}
		& Q_i + \sum_{j\neq i} K_{j,o}^\top R_{ij} K_{j,o} - P_i + A_i^\top P_i A_i - \\
        &A_i^\top P_i B_i (R_{ii} + B_i^\top P_i B_i)^{-1} B_i^\top P_i A_i = 0.
	\end{split}	
\end{align} 

Setting the iteration counter, one can update $Q_i^{(s+1)}$ using IOC update \cite{had08} as the convex combination between current $Q_i^{(s)}$ and $\Tilde{Q}_i^{(s)}$ associated with \eqref{rewrittenmodGare1} as 
\begin{align}\label{upd21}
	\begin{split}
		& Q_i^{(s+1)} = (1-\alpha_i) Q_i^{(s)} + \alpha_i \Tilde{Q}_i^{(s)} = Q_i^{(s)} + \alpha_i (\Tilde{Q}_i^{(s)} - Q_i^{(s)})\\
        & \Tilde{Q}_i^{(s)} = - \sum_{j\neq i} K_{j,o}^\top R_{ij} K_{j,o} + P_i^{(s+1)} - A_i^\top P_i^{(s+1)} A_i + \\
        &A_i^\top P_i^{(s+1)} B_i (R_{ii} + B_i^\top P_i^{(s+1)} B_i)^{-1} B_i^\top P_i^{(s+1)} A_i\,,
	\end{split}	
\end{align}
where $\alpha_i \in (0,1]$ is a step size (further, section \ref{SIMS} provides additional information on possible step size values). Next, one can deduct the following 
\begin{align}
\begin{split}
	&\Tilde{Q}_i^{(s)} - Q_i^{(s)} =  K_{i,o}^\top R_{ii} K_{i,o} -  A_i^\top P_i^{(s+1)} A_i + \\
    & A_i^\top P_i^{(s+1)} B_i (R_{ii} + B_i^\top P_i^{(s+1)} B_i)^{-1} B_i^\top P_i^{(s+1)} A_i + \\
    & (A_i - B_i K_{i,o})^\top P_i^{(s+1)} (A_i - B_i K_{i,o}).
\end{split}
\end{align}
Opening the brackets, canceling one terms while grouping the other ones, one can rewrite the above to the following form
\begin{align}
    \begin{split}
         & \Tilde{Q}_i^{(s)} - Q_i^{(s)} = K_{i,o}^\top (R_{ii} + B_i^\top P_i^{(s+1)} B_i) K_{i,o} + \\
         & A_i^\top P_i^{(s+1)} B_i  (R_{ii} + B_i^\top P_i^{(s+1)} B_i) B_i^\top P_i^{(s+1)} A_i - \\
         & K_{i,o}^\top B_i^\top P_i^{(s+1)} A_i - A_i^\top P_i^{(s+1)} B_i K_{i,o}\,,
    \end{split}
\end{align}
which can be further simplified as
\begin{equation}
     \Tilde{Q}_i^{(s)} - Q_i^{(s)} =\delta_i^{(s+1)\top} (R_{ii} + B_i^\top P_i^{(s+1)} B_i) \delta_i^{(s+1)},
\end{equation}
where
\begin{equation}\label{deltasm}
    \delta_i^{(s+1)} =  (R_{ii} + B_i^\top P_i^{(s+1)} B_i)^{-1} B_i^\top P_i^{(s+1)} A_i - K_{i,o}
\end{equation}
can be interpreted as a difference measure between $K_{i,o}$ and $K_i$ as if it is calculated with $P_i = P_i^{(s+1)}$ as in \eqref{eqfb1}.

Then, \eqref{upd21} becomes 
\begin{equation}\label{upd2}
    Q_i^{(s+1)} = Q_i^{(s)} + \alpha_i \delta_i^{(s+1)\top} (R_{ii} + B_i^\top P_i^{(s+1)} B_i) \delta_i^{(s+1)}.
\end{equation}
Solving iteratively \eqref{upd1} and \eqref{upd2} enforces the result of Lemma \ref{condlem}. After the procedure is repeated a number of times that gives a desired tolerance, i.e., $\|Q_i^{(s+1)} - Q_i^{(s)}\| \leq \rho_i$ for some small constant $\rho_i > 0$,  the feedback law is computed using $P_i^{(s+1)}$ computed from \eqref{upd1} as
\begin{align}\label{upd3}
	\begin{split}
	K_i = (R_{ii} + B_i^\top P_i^{(s+1)} B_i)^{-1} B_i^\top P_i^{(s+1)} A_i.
	\end{split}
\end{align}

\begin{rmk}
	Note that $R_{ij}$ for all $i,j\in\mathcal{N}$ remain unchanged during the iterative procedure but the scaling of $Q_i^{(s+1)}$ happens in order to achieve $K_i = K_{i,o}$. 
\end{rmk} 
\begin{rmk} 
Although we prove convergence when the number of iterations goes to infinity, we would like to highlight the fact that the result after sufficient finite number of iterations of $K_i^{(s)}$ for $i\in\mathcal{N}$ is slightly different from the one in \eqref{eqfb}. The reason is that each $K_i$ in \eqref{eqfb} includes $K_{-i}$. We calculate $K_i$ in \eqref{upd3} using $K_{-i} = K_{-i,o}$. Essentially, after sufficient numbers of iterations, the difference is negligible.
\end{rmk}

The procedure of the model-based algorithm is summarised below in \textbf{Algorithm~ \ref{mbalg}}.

\begin{algorithm}
\caption{Model-based Algorithm}
\label{mbalg}
    \begin{enumerate}
	\item Initialize $R_{ii} > 0$, $R_{ij} \geq 0$, $Q_i^{(0)} \geq 0$ and the step sizes $\alpha_i\in(0,1]$ for $i,j\in\mathcal{N}$. Set the iteration counter $s = 0$ and the desired tolerance $\rho_i > 0$ for $i\in\mathcal{N}$. \\
	\textbf{For each player $i\in\mathcal{N}$ perform the following}:
	\item Compute $P_i^{(s+1)}$ from \eqref{upd1} and update $Q_i^{(s+1)}$ as in \eqref{upd2}.  
	\item If $\|Q_i^{(s+1)} - Q_i^{(s)}\| \leq \rho_i$ then stop and compute $K_i$ from \eqref{upd3}. Otherwise, set $s+1$ and repeat step 2. 
	\end{enumerate}
\end{algorithm}

\subsection{Convergence and Nash optimality}
In this section, we provide theoretical results on the convergence of the algorithm and Nash optimality of its output in the context of the inverse problem. 

\begin{theorem}\label{thconv}
Given the initialized parameters $R_{ii} > 0$, $R_{ij} \geq 0$ and $Q_i^{(0)} \geq 0$ for $i\in\mathcal{N}$, there exist $\alpha_i > 0$ such that the parameters converge as 
\begin{equation}
 \lim_{s\to\infty} P_i^{(s+1)} = P_i^{\infty},\quad \lim_{s\to\infty} Q_i^{(s+1)} = Q_i^{\infty},   
\end{equation}
and, as a result, the feedback laws $K_i$ satisfy
\begin{equation}
    K_i = (R_{ii} + B_i^\top P_i^{\infty} B_i)^{-1} B_i^\top P_i^{\infty} A_i = K_{i,o}\,.
\end{equation}
\end{theorem}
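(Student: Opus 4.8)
The plan is to prove convergence by a monotonicity-plus-boundedness argument in the Loewner order on the cost matrices and then to identify the limit as a fixed point at which the gain residual vanishes. Throughout, write $P_i=\mathcal{L}_i(Q_i)$ for the unique solution of the Stein equation \eqref{upd1} as a function of $Q_i$, and $\delta_i(P)=(R_{ii}+B_i^\top P B_i)^{-1}B_i^\top P A_i-K_{i,o}$ for the gain residual of \eqref{deltasm}; by construction $\delta_i(P)=0$ is exactly $K_i=K_{i,o}$ in \eqref{eqfb1}.

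First I would record two monotonicity facts. From \eqref{upd2}, the increment $Q_i^{(s+1)}-Q_i^{(s)}=\alpha_i\,\delta_i^{(s+1)\top}(R_{ii}+B_i^\top P_i^{(s+1)}B_i)\delta_i^{(s+1)}$ is positive semidefinite since $R_{ii}+B_i^\top P_i^{(s+1)}B_i>0$, so $\{Q_i^{(s)}\}$ is nondecreasing in the Loewner order. Moreover, because $(K_{1,o},\dots,K_{N,o})$ belongs to the admissible set \eqref{stabset}, the matrix $A_i-B_iK_{i,o}=A-\sum_{j=1}^N B_jK_{j,o}$ is Schur stable; hence \eqref{upd1} is a genuine Stein equation with a unique positive definite solution, and the map $\mathcal{L}_i$ is affine and monotone ($Q_i'\succeq Q_i\Rightarrow\mathcal{L}_i(Q_i')\succeq\mathcal{L}_i(Q_i)$). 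Consequently $\{P_i^{(s+1)}\}$ is nondecreasing as well.

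Next I would bound the sequences from above, which is the heart of the proof. Feasibility of the inverse problem (the observed game supplies a reference set of parameters) provides a matrix $\bar Q_i\ge 0$ with $\bar P_i=\mathcal{L}_i(\bar Q_i)$ and $\delta_i(\bar P_i)=0$. The goal is to show, for $\alpha_i$ small enough, that $Q_i^{(s)}\preceq\bar Q_i$ for all $s$ by induction: assuming $Q_i^{(s)}\preceq\bar Q_i$, monotonicity of $\mathcal{L}_i$ gives $P_i^{(s+1)}\preceq\bar P_i$, so the problem reduces to comparing the increment with the gap $\bar Q_i-Q_i^{(s)}$. Since $\delta_i(\cdot)$ is Lipschitz in $P_i$ and vanishes at $\bar P_i$, while $\bar P_i-P_i^{(s+1)}=\mathcal{L}_i(\bar Q_i-Q_i^{(s)})$ is linear in the gap, the increment is quadratically small in the gap, and a sufficiently small $\alpha_i$ keeps the update from overshooting. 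A nondecreasing sequence bounded above in the Loewner order converges entrywise (for each $v$, $v^\top Q_i^{(s)}v$ is monotone and bounded, and polarization handles the off-diagonal entries), so $Q_i^{(s)}\to Q_i^\infty$, and continuity of the affine map $\mathcal{L}_i$ gives $P_i^{(s+1)}\to P_i^\infty=\mathcal{L}_i(Q_i^\infty)$.

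Finally I would pass to the limit. Convergence of $\{Q_i^{(s)}\}$ forces the increments to zero, i.e. $\alpha_i\,\delta_i^{(s+1)\top}(R_{ii}+B_i^\top P_i^{(s+1)}B_i)\delta_i^{(s+1)}\to0$; since $R_{ii}+B_i^\top P_i^{(s+1)}B_i\succeq R_{ii}>0$ uniformly in $s$, this forces $\delta_i^{(s+1)}\to0$, hence $\delta_i(P_i^\infty)=0$ and $K_i=(R_{ii}+B_i^\top P_i^\infty B_i)^{-1}B_i^\top P_i^\infty A_i=K_{i,o}$. Equivalently, at the limit both \eqref{modGARE} and \eqref{modaGARE1} hold for the same $P_i^\infty$, so Lemma \ref{condlem} applies directly. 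I expect the main obstacle to be the boundedness step: the overshoot control must be carried out in the Loewner (matrix) order rather than in norm, where crude spectral bounds are too lossy, so one must exploit the specific rank-$m_i$ quadratic form of the increment and pin down how small $\alpha_i$ must be relative to the problem data.
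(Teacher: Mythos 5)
Your overall architecture is sound and is essentially a rigorized version of what the paper does: positive semidefinite increments make $Q_i^{(s)}$ Loewner-nondecreasing, Schur stability of $A_i - B_iK_{i,o} = A - \sum_{j}B_jK_{j,o}$ makes the Stein map $\mathcal{L}_i$ of \eqref{upd1} well defined, affine and monotone, and at any limit the increments must vanish, which (since $R_{ii}+B_i^\top P_i^{(s+1)}B_i \succeq R_{ii} \succ 0$ uniformly) forces $\delta_i \to 0$ so that Lemma \ref{condlem} identifies the limit with $K_{i,o}$. The paper's proof follows the same monotonicity-plus-feasibility outline (it invokes Remark \ref{rem1} for existence of a target $Q_i$, asserts the increments $\Delta_i^{(s)}$ decrease by an informal induction, and concludes convergence from ``increasing with decreasing increments''), so your route is not different in kind, and your final limiting step is actually cleaner than the paper's.

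The genuine gap is the boundedness step, and it is worse than the ``obstacle'' you flag: the induction you propose, $Q_i^{(s)}\preceq \bar Q_i$ for all $s$ for sufficiently small $\alpha_i$, is false in general for \emph{every} $\alpha_i>0$, so no amount of care with constants can close it. Using $\delta_i(\bar P_i)=0$ one computes exactly
\begin{equation*}
\delta_i(P) = -(R_{ii}+B_i^\top P B_i)^{-1}B_i^\top E\,(A_i-B_iK_{i,o}),\qquad E=\bar P_i - P,
\end{equation*}
hence the increment is
\begin{equation*}
\Delta_i = (A_i-B_iK_{i,o})^\top E\, B_i (R_{ii}+B_i^\top P B_i)^{-1} B_i^\top E\, (A_i-B_iK_{i,o}).
\end{equation*}
Now $0\preceq\alpha_i\Delta_i\preceq G$ with $G=\bar Q_i-Q_i^{(s)}$ requires $\ker G\subseteq\ker\Delta_i$, i.e.\ $\mathrm{range}(\Delta_i)\subseteq\mathrm{range}(G)$, a condition on ranges that scaling $\alpha_i$ cannot create. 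Take $Q_i^{(0)}=\bar Q_i-\epsilon vv^\top$: then $G=\epsilon vv^\top$ while $E=\epsilon\sum_{k\geq0}(A_c^\top)^k vv^\top A_c^k$ (with $A_c=A_i-B_iK_{i,o}$), and $\mathrm{range}(\Delta_i)$, which equals $\mathrm{range}(A_c^\top E B_i)$, is generically not contained in $\mathrm{span}(v)$; so $Q_i^{(1)}\not\preceq\bar Q_i$ no matter how small $\alpha_i$ is, and the induction dies at the first step. The structural reason is that the iteration's limit $Q_i^\infty$ is in general a \emph{different} solution of the inverse problem than the pre-chosen $\bar Q_i$ (by Remark \ref{rem1} there are infinitely many), so no fixed feasible $\bar Q_i$ can dominate the whole trajectory; an upper bound would have to come from the iteration itself (e.g.\ summability of the increments), not be imported from feasibility. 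To be fair, the paper does not close this step either---decreasing increments do not imply convergence---but your proposal, as written, replaces the paper's informal claim with a specific claim that is provably not salvageable.
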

\begin{proof}
Firstly, the following notation are introduced
\begin{align}
    \begin{split}\label{ktilde}
        & \Delta_i^{(s+1)} =  \delta_i^\top (R_{ii} + B_i^\top P_i^{(s+1)} B_i) \delta_i\,, \\
        & \Tilde{K}_i^{(s+1)} = (R_{ii} + B_i^\top P_i^{(s+1)} B_i)^{-1} B_i^\top P_i^{(s+1)} A_i\,,
    \end{split}
\end{align}
where $\delta_i^{(s+1)}$ in \eqref{deltasm} then can be rewritten as $\delta_i^{(s+1)} =  \Tilde{K}_i^{(s+1)}- K_{i,o}$. Next, consider \eqref{upd2} 
\begin{equation}\label{qupdatee}
	Q_i^{(s+1)} = Q_i^{(s)} + \alpha_i \Delta_i^{(s+1)}. 
\end{equation}
Since computed from \eqref{upd1} $P_i^{(s+1)} > 0$ for $s=0,1,2,\dots$, $R_{ii} + B_i^\top P_i^{(s+1)} B_i > 0$ holds. Thus, one concludes that $\Delta_i^{(s+1)} \geq 0$ for $s=0,1,2,\dots$ with the equality being valid only when 
\begin{equation}
    K_{i,o} = \Tilde{K}_i^{(s+1)} = (R_{ii} + B_i^\top P_i^{(s+1)} B_i)^{-1} B_i^\top P_i^{(s+1)} A_i\,.
\end{equation}
Thus, $Q_i^{(s+1)} > Q_i^{(s)}$ holds while $P_i^{(s+1)}$ is such that $\tilde{K}_i^{(s+1)} \neq K_{i,o}$.  As it is mentioned in Remark \ref{rem1}, for a given fixed $R_{ij}$ parameters, there exists $Q_i$ parameter such that $K_{i,o}$ is optimal feedback law for each player $i\in\mathcal{N}$. Hence, staring with $Q_i^{(0)} \geq 0$, one wants to approach such $Q_i$ from "below" by reaching it with $ \alpha_i$ step size. Thus, $Q_i^{(0)} \leq Q_i$ should be chosen (if $Q_i^{(0)}$ is equal to target $Q_i$, $P_i^{(1)}$ from \eqref{upd1} gives the target feedback law). With selected $\alpha_i$, from \eqref{upd2} one gets new $Q_i^{(1)} > Q_i^{(0)}$. Then, the next $P_i^{(2)}$, computed from \eqref{upd1} with $Q_i^{(1)}$, is such that $\tilde{K}_i^{(2)}$ from \eqref{ktilde} gets closer to $K_{i,o}$. Hence, $\Delta_i^{(2)} < \Delta_i^{(1)}$. By induction, we can conclude that $\Delta_i^{(s+1)} \leq \Delta_i^{(s)}$ and, as a result, $Q_i^{(s+1)} - Q_i^{(s)} < Q_i^{(s)} - Q_i^{(s-1)}$. Thus, $Q_i^{(s+1)}$ increases relative to $Q_i^{(s)}$, but $\Delta_i^{(s+1)}$ decreases relative to $\Delta_i^{(s)}$. It implies from \eqref{ktilde} that $P_i^{(s+1)}$ approaches such a value that $\Tilde{K}_i^{(s+1)}$ in \eqref{ktilde} approaches to $K_{i,o}$. Hence, we can conclude that $\lim_{s\to\infty} Q_i^{(s+1)} = Q_i^\infty$ and, consequently, $\lim_{s\to\infty} P_i^{(s+1)} = P_i^\infty$ where $P_i^\infty$ is such that
\begin{equation}
    (R_{ii} + B_i^\top P_i^{\infty} B_i)^{-1} B_i^\top P_i^{\infty} A_i = K_{i,o}\,.
\end{equation}
From \eqref{upd3}, one concludes $K_i = K_{i,o}$. This proves the convergence of the algorithm and completes the proof. 
\end{proof}

\begin{rmk}
As it is mentioned in the above proof, $Q_i^{(0)}$ needs to be chose such that $Q_i^{(0)} \leq Q_i$. Thus, we suggest to choose $Q_i^{(0)} = \mathbf{0}_n$ or $Q_i^{(0)} = \epsilon I_n$ where epsilon is a small positive constant.
\end{rmk}

\begin{theorem}
Algorithm \ref{mbalg} generates parameters $Q_i^{\infty}$ and $R_{ij}$ for $i,j\in\mathcal{N}$ such that $(A,\mathbf{B},\mathbf{Q}^\infty,\mathbf{R})$ has NE given by $(K_{1,o},\dots,K_{N,o})$. 
\end{theorem}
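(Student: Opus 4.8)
The plan is to show that the limiting parameters produced by Algorithm~\ref{mbalg} satisfy, together with $(K_{1,o},\dots,K_{N,o})$, the three requirements that characterize a NE as summarized in the discussion following \eqref{GARE}: that the limiting cost matrices are admissible (symmetric with the correct definiteness), that $(K_{1,o},\dots,K_{N,o})$ solve the equilibrium feedback relation \eqref{eqfb} with a solution of the GARE system \eqref{GARE}, and that $(K_{1,o},\dots,K_{N,o})$ lie in the stabilizing set \eqref{stabset}. Theorem~\ref{thconv} already supplies the convergence $P_i^{(s+1)}\to P_i^\infty$, $Q_i^{(s+1)}\to Q_i^\infty$ and the feedback identity $K_{i,o}=(R_{ii}+B_i^\top P_i^\infty B_i)^{-1}B_i^\top P_i^\infty A_i$, so most of the analytic work is already done; the present theorem is essentially an assembly of these facts into the NE characterization.

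First I would pass to the limit $s\to\infty$ in the update \eqref{upd1}. Since \eqref{upd1} is, for each $s$, a linear Stein equation in $P_i^{(s+1)}$ whose coefficients are continuous in $Q_i^{(s)}$, continuity of the solution map gives that $P_i^\infty$ satisfies
\begin{equation*}
Q_i^\infty + \sum_{j=1}^N K_{j,o}^\top R_{ij} K_{j,o} - P_i^\infty + (A_i - B_i K_{i,o})^\top P_i^\infty (A_i - B_i K_{i,o}) = 0.
\end{equation*}
The key observation is the identity $A_i - B_i K_{i,o} = A - \sum_{j=1}^N B_j K_{j,o}$, immediate from $A_i = A - \sum_{j\neq i} B_j K_{j,o}$. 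Substituting it shows that the displayed equation is exactly the GARE \eqref{GARE} evaluated at $Q_i=Q_i^\infty$, $K_j=K_{j,o}$ and $P_i=P_i^\infty$, so $(P_1^\infty,\dots,P_N^\infty)$ solves the GARE system for $(A,\mathbf{B},\mathbf{Q}^\infty,\mathbf{R})$. Writing $A_i$ out, the feedback identity from Theorem~\ref{thconv} is precisely \eqref{eqfb} with $K_i^*=K_{i,o}$. Admissibility of the costs then follows since the $R_{ij}$ are never modified and retain their initialized properties $R_{ij}\geq0$, $R_{ii}>0$, while $Q_i^{(s+1)}=Q_i^{(s)}+\alpha_i\Delta_i^{(s+1)}$ with $\Delta_i^{(s+1)}\geq0$ and $Q_i^{(0)}\geq0$ makes the sequence monotone, so the limit satisfies $Q_i^\infty\geq Q_i^{(0)}\geq0$.

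The step requiring the most care is the stabilizing requirement \eqref{stabset}, i.e. that $A-\sum_{j} B_j K_{j,o}$ is Schur. The clean route is to observe that Schur stability of the closed loop depends only on $(A,\mathbf{B})$ and the feedback laws $(K_{1,o},\dots,K_{N,o})$, and is entirely independent of the cost parameters. Since $(K_{1,o},\dots,K_{N,o})$ are given as a NE of the observed game, they are stabilizing by the standing restriction that admissible NE lie in $\mathcal{K}$; hence $A-\sum_j B_j K_{j,o}$ is Schur, and the same feedback laws remain admissible for the constructed game. As a consistency check one may rewrite the limiting GARE as the Lyapunov relation $A_{cl}^\top P_i^\infty A_{cl} - P_i^\infty = -\big(Q_i^\infty + \sum_j K_{j,o}^\top R_{ij} K_{j,o}\big)$ with $A_{cl}=A-\sum_j B_j K_{j,o}$; testing it against an eigenvector $v$ of $A_{cl}$ with eigenvalue $\lambda$ gives $(|\lambda|^2-1)\,v^* P_i^\infty v \le 0$ with $P_i^\infty>0$, which is consistent with $|\lambda|\le1$. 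With the three conditions in place, the NE characterization following \eqref{GARE} yields that $(K_{1,o},\dots,K_{N,o})$ is a NE of $(A,\mathbf{B},\mathbf{Q}^\infty,\mathbf{R})$, completing the proof. I expect the only genuinely delicate point to be this stability argument, specifically being careful that strict Schur stability is inherited from the given NE rather than extracted from the merely semidefinite Lyapunov relation alone.
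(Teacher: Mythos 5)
Your proposal is correct and takes essentially the same route as the paper's own proof: invoke Theorem~\ref{thconv} to obtain the limiting GARE \eqref{GARE} and the feedback identity \eqref{eqfb} at $(P_i^\infty, Q_i^\infty)$ with $K_i = K_{i,o}$, then apply the NE sufficient condition stated after \eqref{GARE}. You are in fact more complete than the paper, whose proof never explicitly checks the stabilizing requirement \eqref{stabset} or the admissibility ($Q_i^\infty \geq 0$) of the limits; your observations that stability is inherited from the observed NE independently of the cost parameters, and that monotonicity of the $Q_i^{(s)}$ updates gives $Q_i^\infty \geq 0$, fill in exactly what the paper leaves implicit.
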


\begin{proof}
    Theorem \ref{thconv} shows that for each $i\in\mathcal{N}$ $P_i^\infty$ satisfies GARE
\begin{align}\label{result}
    \begin{split}
        & Q_i^\infty + \sum_{j\neq i} K_{j,o}^\top R_{ij} K_{j,o} + K_i^\top R_{ii} K_i - P_i^\infty + \\
        & (A_i - B_i K_i)^\top P_i^\infty (A_i - B_i K_i) = 0,
    \end{split}
\end{align}
where $K_i$ is received from \eqref{upd3}  
\begin{equation}
    K_i = (R_{ii} + B_i^\top P_i^{\infty} B_i)^{-1} B_i^\top P_i^{\infty} A_i = K_{i,o}\,.
\end{equation}
Thus, $P_i^\infty$ is a solution of GARE derived from the Bellman's optimality equation and $K_i$ is such a feedback law that satisfies \eqref{eqfb} derived from the stationarity condition. Thus, we conclude that $(K_{1,o},\dots,K_{N,o})$ is NE feedback laws. This completes the proof.   
\end{proof}

\subsection{Stability and solution characterization}
In this section, we are showing an additional property of the algorithm, namely the stability of the dynamics with particular set of the feedback laws. Also, we provide the characterization of the possible solutions of the inverse dynamic game problem and show how to generate a new equivalent game without requiring to run the algorithm again. 

Firstly, the stability property is discussed. The given tuple $(K_{1,o},\dots,K_{N,o})$ is known to stabilize the system \eqref{obssysdyn} because it is a NE pair. The stability of $(K_1^{(s)},\dots, K_N^{(s)})$ as if we stop at any iteration $s=1,2,\dots$ can not be concluded, i.e., we cannot guarantee that the set of feedback laws at any iteration belongs to the set in \eqref{stabset}. However, when each $K_i^{(s)}$ computed as in \eqref{upd3} approaches the target feedback laws "close enough", let say, at iteration $\bar{s}$, each subsequent iteration is essentially gives stable dynamics $A - \sum_{j=1}^N B_j K_j^{(\bar{s} + s)}$ for $s=0,1,2,\dots$. Another analytical result on stability is provided below. 

\begin{theorem}\label{stabth}
At each iteration $s=0,1,2,\dots$, $P_i^{(s+1)}$ is such that $A_i - B_i K_i$, where $K_i$ computed via \eqref{upd3} with $P_i^{(s+1)}$, stabilize the system. 
\end{theorem}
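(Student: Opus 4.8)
The plan is to turn the solution $P_i^{(s+1)}>0$ of the Stein equation \eqref{upd1} into a Lyapunov certificate for the closed loop $x(k+1)=(A_i-B_iK_i)x(k)$, with $K_i$ taken from \eqref{upd3}. Since $P_i^{(s+1)}$ is positive definite whenever \eqref{upd1} is solvable (the standing condition being $C_i:=Q_i^{(s)}+\sum_{j=1}^N K_{j,o}^\top R_{ij}K_{j,o}>0$), the whole statement reduces to showing that the Lyapunov difference $(A_i-B_iK_i)^\top P_i^{(s+1)}(A_i-B_iK_i)-P_i^{(s+1)}$ is negative definite; discrete-time Lyapunov theory then delivers $\sigma(A_i-B_iK_i)\in\mathbb{C}_d$.

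First I would derive the key algebraic identity. Abbreviating $P=P_i^{(s+1)}$ and $M_i=R_{ii}+B_i^\top P B_i>0$, the definition \eqref{upd3} is the same as $M_iK_i=B_i^\top P A_i$, so the cross terms in $(A_i-B_iK_i)^\top P(A_i-B_iK_i)$ can be rewritten through $K_i$; eliminating $A_i^\top P A_i$ with the Stein equation \eqref{upd1} and completing the square gives
\begin{align}\label{lyapid}
\begin{split}
& (A_i-B_iK_i)^\top P (A_i-B_iK_i)-P = -W_i,\quad\text{where}\\
& W_i=Q_i^{(s)}+\sum_{j\neq i}K_{j,o}^\top R_{ij}K_{j,o}+K_i^\top R_{ii}K_i+(K_i-K_{i,o})^\top M_i (K_i-K_{i,o}).
\end{split}
\end{align}
The essential point of this computation is that the term $K_{i,o}^\top R_{ii}K_{i,o}$ contributed by \eqref{upd1} cancels exactly against a matching term produced by the completion of squares, leaving the four manifestly positive-semidefinite blocks displayed in $W_i$.

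It then remains to show $W_i>0$ and to invoke the Lyapunov theorem. Every block of $W_i$ is positive semidefinite ($Q_i^{(s)}\geq 0$, $R_{ij}\geq 0$, $R_{ii}>0$, $M_i>0$), so $W_i\geq 0$. To upgrade this to strict definiteness I would argue by the kernel: if $x^\top W_i x=0$ then each block vanishes on $x$, hence $R_{ii}^{1/2}K_ix=0$ and $M_i^{1/2}(K_i-K_{i,o})x=0$, which force $K_ix=0$ and therefore $K_{i,o}x=0$, while also $Q_i^{(s)}x=0$ and $R_{ij}^{1/2}K_{j,o}x=0$ for $j\neq i$; substituting these into $x^\top C_i x$ makes it vanish, and $C_i>0$ then gives $x=0$. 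Thus $W_i>0$, and \eqref{lyapid} together with $P>0$ certifies by the discrete Lyapunov theorem that $A_i-B_iK_i$ is Schur stable at every iteration $s$. I expect the bulk of the work to be the routine but error-prone bookkeeping of the completion of squares in \eqref{lyapid}; the subtler conceptual step is the kernel argument, which is precisely what ties the stability of the improved feedback $K_i$ back to the positive-definiteness of the effective cost that guarantees $P_i^{(s+1)}>0$. Equivalently, one may read \eqref{upd1} as the policy-evaluation Lyapunov equation of the single-player LQR problem with dynamics $(A_i,B_i)$ and input weight $R_{ii}$ for the stabilizing policy $K_{i,o}$, and \eqref{upd3} as the corresponding greedy policy-improvement step, whose preservation of the stabilizing property is classical.
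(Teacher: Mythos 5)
Your proof is correct, and its core strategy is the same as the paper's: both use $V_i(x)=x^\top P_i^{(s+1)}x$ as a Lyapunov certificate for $A_i-B_iK_i$ and reduce the theorem to showing that the Lyapunov difference is the negative of a (positive definite) matrix. In fact your $W_i$ coincides exactly with the paper's decomposition: the paper writes the difference as $-\Tilde{Q}_i^{(s)}-\sum_{j\neq i}K_{j,o}^\top R_{ij}K_{j,o}-\Tilde{K}_i^{(s+1)\top}R_{ii}\Tilde{K}_i^{(s+1)}$ and then shows $\Tilde{Q}_i^{(s)}=Q_i^{(s)}+\Delta_i^{(s+1)}$, where $\Delta_i^{(s+1)}$ is precisely your completed square $(K_i-K_{i,o})^\top M_i(K_i-K_{i,o})$; you obtain the same four blocks by direct algebra on \eqref{upd1} and \eqref{upd3} instead of routing through the update relation \eqref{upd21}. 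Where you genuinely depart from (and improve on) the paper is the final definiteness step. The paper asserts $\Tilde{K}_i^{(s+1)\top}R_{ii}\Tilde{K}_i^{(s+1)}>0$, which is generally false: this $n\times n$ matrix has rank at most $m_i$, so for $m_i<n$ it is only positive semidefinite, and term-by-term the paper's argument only yields $W_i\geq 0$, which is not enough for Schur stability. Your kernel argument closes exactly this gap: vanishing of $x^\top W_i x$ forces $K_i x=0$ and $(K_i-K_{i,o})x=0$, hence $K_{i,o}x=0$ and $R_{ij}K_{j,o}x=0$ for $j\neq i$, so $x^\top\bigl(Q_i^{(s)}+\sum_{j=1}^N K_{j,o}^\top R_{ij}K_{j,o}\bigr)x=0$, contradicting the standing solvability condition $C_i>0$ (which the paper itself imposes right after \eqref{upd1} to guarantee $P_i^{(s+1)}>0$) unless $x=0$. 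So your proof is not merely correct; it supplies the strictness argument that the published proof glosses over.
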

\begin{proof}
Technically, we want to proof that $\Tilde{K}_i^{(s+1)}$ defined in \eqref{ktilde} is such that
\begin{equation}\label{itdyn}
x(k+1) = (A_i - B_i \Tilde{K}_i^{(s+1)}) x(k) 
\end{equation}
is stable which is equivalent as proving 
\begin{equation}
V_i^{(s+1)}(x(k+1)) - V_i^{(s+1)}(x(k)) < 0,
\end{equation}
where $V_i^{(s+1)}(x(k)) = x^\top (k) P_i^{(s+1)} x(k)$. Hence, one has 
\begin{align}
    \begin{split}
        & V_i^{(s+1)}(x(k+1)) - V_i^{(s+1)} (x(k)) = x^\top (k) \big(- P_i^{(s+1)} + \\
        & (A_i - B_i \Tilde{K}_i^{(s+1)})^\top P_i^{(s+1)} (A_i - B_i \Tilde{K}_i^{(s+1)}\big) x(k).
    \end{split}
\end{align}
Considering \eqref{upd21}, the above can be rewritten as 
\begin{align}\label{stabdif}
    \begin{split}
        & V_i^{(s+1)}(x(k+1)) - V_i^{(s+1)} (x(k)) = x^\top (k) \big(- \Tilde{Q}_i^{(s)} - \\
        & \sum_{j\neq i}^N K_{j,o} R_{ij} K_{j,o} - \Tilde{K}_{i}^{(s+1)\top} R_{ii}\Tilde{K}_{i}^{(s+1)}\big) x(k).
    \end{split}
\end{align}
Next, using \eqref{upd21} and \eqref{qupdatee}, $\Tilde{Q}_i^{(s+1)}$ can be rewritten as 
\begin{align}
    \begin{split}
    &\Tilde{Q}_i^{(s)} = \alpha_i^{-1} ( Q_i^{(s+1)} - (1-\alpha_i) Q_i^{(s)}) = \\
    & \alpha_i^{-1} ( \alpha_i \Delta_i^{(s+1)} + \alpha_i Q_i^{(s)}) =  \Delta_i^{(s+1)} + Q_i^{(s)} \geq 0.
    \end{split}
\end{align} 
Thus, in \eqref{stabdif} one has $\Tilde{Q}_i^{(s)} \geq 0$, $\sum_{j\neq i}^N K_{j,o} R_{ij} K_{j,o}\geq 0$ and $\Tilde{K}_{i}^{(s+1)\top} R_{ii}\Tilde{K}_{i}^{(s+1)} > 0$. Hence, 
\begin{equation}
    V_i^{(s+1)}(x(k+1)) - V_i^{(s+1)} (x(k)) < 0,
\end{equation}
which completes the proof. 
\end{proof}

Next, we provide the result that allows to generate new games without reusing the algorithm. Firstly, note that there exists infinitely many possible combinations of the parameters $(\mathbf{Q}^\prime, \mathbf{R}^\prime)$ that form an equivalent games. All these combination are captured in the following equality 
\begin{align}
	\begin{split}
& (Q_i^\infty - Q_i^\prime) + \sum_{j=1}^N K_j^\top (R_{ij} -  R_{ij}^\prime) K_j - (P_i - P_i^\prime) + \\
& (A - \sum_{j=1}^N B_j K_j)^\top (P_i - P_i^\prime) (A - \sum_{j=1}^N B_j K_j) = 0.
	\end{split}
\end{align}
where $P_i^\prime$ is a solution GARE (with parameters $Q_i^\prime$ and $R_{ij}^\prime$) that needs to satisfy 
\begin{equation}
    K_{i,o} = (R_{ii}^\prime + B_i^\top P_i^\prime B_i)^{-1} B_i^\top P_i^\prime A_i\,.
\end{equation}
Then, the following proposition can be suggested. 
\begin{prop}\label{characprop}
Let Algorithm \ref{mbalg} generated the set of parameters $(\mathbf{Q},\mathbf{R})$ and consider cost function parameters $(\mathbf{Q}^\prime,\mathbf{R}^\prime)$ satisfying 
\begin{equation}\label{change}
Q_i^\prime + \sum_{j\neq i} K_{j,o}^\top R_{ij}^\prime K_{j,o} = Q_i + \sum_{j\neq i} K_{j,o}^\top R_{ij} K_{j,o}.
\end{equation}
Then games $(A,\mathbf{B},\mathbf{Q},\mathbf{R})$ and $(A,\mathbf{B},\mathbf{Q}^\prime,\mathbf{R}^\prime)$ are equivalent.
\end{prop}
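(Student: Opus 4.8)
The plan is to prove equivalence by exhibiting a common Nash equilibrium, namely the observed pair $(K_{1,o},\dots,K_{N,o})$ itself. I will use the NE characterization recalled after \eqref{GARE}: a stabilizing tuple is a NE of a game precisely when it satisfies the feedback relation \eqref{eqfb} together with a positive-definite solution of the corresponding GARE \eqref{GARE}. The Nash-optimality theorem above already gives that $(K_{1,o},\dots,K_{N,o})$ is a NE of $(A,\mathbf{B},\mathbf{Q},\mathbf{R})$; let $P_i>0$ denote its GARE solution, and set $A_{cl}=A-\sum_{j=1}^N B_j K_{j,o}=A_i-B_i K_{i,o}$, which is Schur-stable. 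Then $Q_i+\sum_{j=1}^N K_{j,o}^\top R_{ij}K_{j,o}-P_i+A_{cl}^\top P_i A_{cl}=0$ and $K_{i,o}=(R_{ii}+B_i^\top P_i B_i)^{-1}B_i^\top P_i A_i$. It therefore suffices to establish these same two facts for the primed game.

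The structural key is that the weights $R_{ij}$ with $j\neq i$ enter the NE conditions only through the constant term $Q_i+\sum_{j=1}^N K_{j,o}^\top R_{ij}K_{j,o}$ of the GARE, and not at all through the feedback law \eqref{eqfb}, which depends on the cost solely via the diagonal weight $R_{ii}$. Holding $R_{ii}'=R_{ii}$ fixed (as the algorithm does) and adding $K_{i,o}^\top R_{ii}K_{i,o}$ to both sides of \eqref{change}, the full effective state penalty is preserved, $Q_i'+\sum_{j=1}^N K_{j,o}^\top R_{ij}'K_{j,o}=Q_i+\sum_{j=1}^N K_{j,o}^\top R_{ij}K_{j,o}$. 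Hence setting $P_i':=P_i$ and substituting into the primed game's GARE evaluated at $(K_{1,o},\dots,K_{N,o})$ reproduces the unprimed GARE verbatim, so $P_i'=P_i>0$ is a valid positive-definite solution. Equivalently, subtracting the two GAREs yields the difference equation displayed just before the proposition, which under \eqref{change} collapses to the homogeneous Stein equation $(P_i-P_i')=A_{cl}^\top(P_i-P_i')A_{cl}$; Schur-stability of $A_{cl}$ forces $P_i-P_i'=0$.

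Finally, with $P_i'=P_i$ and $R_{ii}'=R_{ii}$ the feedback law \eqref{eqfb} for the primed game returns $(R_{ii}'+B_i^\top P_i' B_i)^{-1}B_i^\top P_i' A_i=K_{i,o}$, and stability is automatic because the shared closed loop $A_{cl}$ is already Schur-stable, so $(K_{1,o},\dots,K_{N,o})$ belongs to the admissible set \eqref{stabset}. By the NE characterization the observed pair is a NE of $(A,\mathbf{B},\mathbf{Q}',\mathbf{R}')$ as well, and being shared with $(A,\mathbf{B},\mathbf{Q},\mathbf{R})$ it makes the two games equivalent by definition. The step requiring care — and the main obstacle — is the implicit convention $R_{ii}'=R_{ii}$: if the diagonal weight were allowed to change, the GARE constant term would shift by $K_{i,o}^\top(R_{ii}'-R_{ii})K_{i,o}$, forcing $P_i'\neq P_i$ through the Stein equation while simultaneously perturbing the feedback relation \eqref{eqfb}, so that a single $P_i'$ could no longer satisfy both conditions and the claimed equivalence would fail. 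Making this hypothesis explicit is thus exactly what the argument relies on.
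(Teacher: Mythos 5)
Your proof is correct and takes essentially the same route as the paper's (much terser) argument: under \eqref{change} with $R_{ii}$ held fixed, the constant term of the GARE \eqref{GARE} evaluated at $(K_{1,o},\dots,K_{N,o})$ is unchanged, hence $P_i$ is unchanged, hence the feedback relation \eqref{eqfb} still returns $K_{i,o}$, so the observed tuple is a shared NE. The extra details you supply — uniqueness of $P_i^\prime$ via the homogeneous Stein equation with Schur-stable closed loop, and the explicit flagging of the implicit convention $R_{ii}^\prime = R_{ii}$ (which the paper only signals by writing ``$R_{ij}\to R_{ij}^\prime$ for $j\neq i$'') — merely make rigorous what the paper leaves tacit.
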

\begin{proof}
Changing $Q_i\to Q_i^\prime$ and $R_{ij}\to R_{ij}^\prime$ for $j\neq i$ according to \eqref{change} does not affect $P_i$ in \eqref{GARE} and $K_i$ in \eqref{eqfb}. Thus, $K_i$ remains the same for each $i\in\mathcal{N}$. This completes the proof. 
\end{proof}
The main importance of the proposition is that, after the equivalent game is derived via Algorithm \ref{mbalg}, one can relax assumptions on $R_{ij}$ for $j\neq i$ imposed in Step 1 of the algorithm, i.e., $R_{ij} \geq 0$. 

\section{Problem 2: Model-free algorithm}\label{MF}
The model-free algorithm is based on reinforcement Q-learning presented in \cite{al07} and \cite{kimurasi14} for designing linear discrete-time zero-sum games with application to $H_\infty$-control and optimal tracking control of linear discrete-time systems, respectively. For implementing this, we need to make the following assumption that allows us to collect sufficient data for developing an algorithm that does not use the system dynamics matrices $(A,B_1,\dots,B_N)$. 
\begin{assump}
The system \eqref{obssysdyn} is accessible, i.e., control inputs can be applied to the system for the data collection.
\end{assump}

\subsection{Model-free algorithm: Q-learning}\label{sectionq}
As it is noted in the previous section, the model-based algorithm is the template for the model-free version. The initialization step of the model-free algorithm is the same, i.e., $Q_i^{(0)} \geq 0$, $R_{ij} \geq 0$ and $R_{ii} >0$ for $i,j\in\mathcal{N}$. 

Firstly, referring to Bellman equation \eqref{belman} and considering that $u_i (k) = - K_i x(k)$, we introduce Q-function in the following form 
\begin{align}\label{qfunc}
\begin{split}
& \mathcal{Q}_i (x,u_i,P_i) = x^\top (k) P_i x(k) = \\
& x^\top(k) (Q_i + \sum_{j\neq i} K_j^\top R_{ij} K_j) x(k) + u_i^\top (k) R_{ii} u_i(k) + \\
& x^\top(k) (A_i - B_i K_i)^\top P_i (A_i - B_i K_i) x(k).
\end{split}
\end{align}
Considering \eqref{upd1} multiplied by $x^\top(k)$ and $x(k)$ as in \eqref{qfunc}, the introduced $Q$-function associated with player $i$ can be written as
\begin{align}\label{Hfunc}
	\begin{split}
		& \mathcal{Q}_i(x,u_i,P_i^{(s+1)}) = x^\top(k) P_i^{(s+1)} x(k) = \\
		&	\begin{pmatrix}
		x(k) \\ u_i(k)
		\end{pmatrix}^\top
		\begin{pmatrix}
		H_{ixx}^{(s+1)} & H_{ixu}^{(s+1)}\\
		H_{iux}^{(s+1)} & H_{iuu}^{(s+1)}
		\end{pmatrix}
		\begin{pmatrix}
		x_o(k) \\ u_i(k)
		\end{pmatrix} = \\
         & \bar{x}_i(k) H_i^{(s+1)}\bar{x}_i(k),
	\end{split}
\end{align}
where $u_i(k) = -K_{i,o} x(k)$ and the elements of $H_i^{(s+1)}$ are
\begin{align}\label{Hterms}
    \begin{split}
        & H_{ixx}^{(s+1)} = Q_i^{(s)} + A_i^\top P_i^{(p+1)} A_i + \sum_{j\neq i} K_{j,o}^\top R_{ij} K_{j,o}, \\ 
        & H_{ixu}^{(s+1)} = H_{iux}^{(s+1)\top} = A_i^\top P_i^{(s+1)} B_i,\\
        & H_{iuu}^{(s+1)} = R_{ii} + B_i^\top P_i^{(s+1)} B_i.
    \end{split}
\end{align}
Then, using the above, \eqref{upd1} can be rewritten as 
\begin{align}\label{upd1redone}
    \begin{split}
        & x^\top(k) (Q_i^{(s)} + \sum_{j\neq i} K_{j,o}^\top R_{ij} K_{j,o}) x(k) + u_i^\top(k) R_{ii} u_i(k) - \\ & \bar{x}_i^\top(k) H_i^{(s+1)} \bar{x}_i(k) + \bar{x}_i^\top (k+1) H_i^{(s+1)} \bar{x}_i(k+1) = 0.
    \end{split}
\end{align}
Next, we modify \eqref{upd2} in the same fashion as \eqref{upd1}. Multiplied by $x^\top(k)$ and $x(k)$, \eqref{upd2} becomes
\begin{equation}\label{upd2redone}
 x^\top(k) Q_i^{(s+1)} x(k) = x^\top(k) Q_i^{(s)}  x(k) + \alpha_i 
 x^\top(k) \Delta_i^{(s+1)}  x(k),
\end{equation}
where $\Delta_i^{(s+1)}$ is given in \eqref{ktilde}. $\Delta_i^{(s+1)}$ can be rewritten in terms of \eqref{Hterms} without using dynamics matrices as follows 
\begin{align}\label{DELTAredone}
    \begin{split}
        &\Delta_i^{(s+1)} = \\
        &((H_{iuu}^{(s+1)})^{-1} H_{iux} - K_{i,o})^\top H_{iuu}^{(s+1)} ((H_{iuu}^{(s+1)})^{-1} H_{iux} - K_{i,o}).
    \end{split}
\end{align}
Hence, solving \eqref{upd1redone} with respect to $H_i^{(s+1)}$ and then using it to update $Q_i^{(s+1)}$ enforces the result of Lemma \ref{condlem}. After the procedure is repeated a number of times that gives a desired tolerance, i.e., $\|Q_i^{(s+1)} - Q_i^{(s)}\| \leq \rho_i$ for some small constant $\rho_i > 0$, the feedback law is computed using $H_i^{(s+1)}$ computed as
\begin{equation}\label{upd3redone}
    K_i = (H_{iuu}^{(s+1)})^{-1} H_{iux}^{(s+1)}.
\end{equation}

Both \eqref{upd1redone} and \eqref{upd2redone} include trajectories $x(k)$ and $u_i(k)$. Hence these trajectories need to be generated. Solving \eqref{upd1redone} via the batch-least square, as in \cite{jiang12}, requires these trajectories to satisfy a persistence of excitation condition (PE) which can be guaranteed by injecting probing noise in the control inputs \cite{had08}. The choice of noise might be random noise \cite{al07}, exponentially decreasing exploration noise \cite{vamvs15} or sinusoidal signal with different frequencies \cite{jiang12}. Thus, the following trajectories 
\begin{equation}
    x(k+1) = A_i x(k) + B_i u_i(k),  
\end{equation}
where $u_i(k) = - K_{i,o} x(k) + \epsilon_i (k)$ with $\epsilon_i (k)$ being a noise need to be generated for each $i\in\mathcal{N}$, i.e., totally $N$ pairs of trajectories denoted as $(x_i^{\epsilon},u_i)$. Adding noise does not affect solutions as it is shown in \cite{kiumarsi17} for the linear systems.

\begin{rmk}
In contrast to \eqref{upd1redone}, solving \eqref{upd2redone} does not require injection of noise to the control inputs. However, it is not necessary to generate one more noise-free  trajectory $x(k)$ (via $x(k+1) = (A - \sum_{j=1}^N B_j K_{j,o}) x(k)$), since the trajectories generated for solving \eqref{upd1redone} can be used. 
\end{rmk} 

The proofs of the convergence and Nash optimality are omitted since they are the same as in \cite{al07}. The proofs are based on proving the equivalence of solving \eqref{upd1} and \eqref{upd2} to \eqref{upd1redone} and \eqref{upd2redone}, respectively \cite{had08}. Also, other analytical properties shown in Theorem \ref{stabth} and Proposition \ref{characprop} are also valid for the model-free algorithm.  

The way to solve \eqref{upd1redone} and \eqref{upd2redone} given the pairs of generated trajectories is described in the following section. 

\subsection{Implementation of model-free algorithm}
To implement the algorithm, one needs to use Kronecker product properties 
\begin{equation}
a^\top B c = (c^\top\otimes a^\top) \text{vec} (B). 
\end{equation} 
Then, \eqref{upd1redone} can be rewritten as 
\begin{align}
    \begin{split}
        & x_i^{\epsilon\top}(k) (Q_i^{(s)} + \sum_{j\neq i} K_{j,o}^\top R_{ij} K_{j,o}) x_i^{\epsilon}(k) + u_i^\top(k) R_{ii} u_i(k) = \\ & (\bar{x}_i^{\epsilon\top}(k)\otimes \bar{x}_i^{\epsilon\top} (k)   - \bar{x}_i^{\epsilon\top} (k+1) \otimes \bar{x}_i^{\epsilon\top}(k+1))\text{vec}(H_i^{(s+1)}),
    \end{split}
\end{align}
where $\bar{x}_i^{\epsilon}(k) = [x_i^{\epsilon}(k), u_i(k)]^\top$. $H_i^{(s+1)}$ is a symmetric matrix that has $(n+m_i)(n+m_i+1)/2$ unknown elements that are to be computed. We introduce the following notations
\begin{align}
\begin{split}
\phi_i(k) & = x_i^{\epsilon\top}(k) (Q_i^{(s)} + \sum_{j\neq i} K_{j,o}^\top R_{ij} K_{j,o}) x_i^{\epsilon}(k) + u_i^\top(k) R_{ii} u_i(k),\\
\psi_i(k) & = (\bar{x}_i^{\epsilon\top}(k)\otimes \bar{x}_i^{\epsilon\top} (k)   - \bar{x}_i^{\epsilon\top} (k+1) \otimes \bar{x}_i^{\epsilon\top}(k+1))^\top,\\
\Phi_i & = (\phi_i(k), \phi_i(k+1), \dots, \phi_i(k + n_{H_i} -1))^\top,\\
\Psi_i & = (\psi_i(k), \psi_i(k+1),\dots, \psi_i(k + n_{H_i} -1)^\top.\\
\end{split}
\end{align}
where $n_{H_i} \geq (n+m_i)(n+m_i+1)/2$. Then, one can use the batch-least square \cite{jiang12} to calculate 
\begin{equation}\label{lq1}
\text{vec}(H_i^{(s+1)}) = (\Psi_i^\top \Psi_i)^{-1} \Psi_i^\top \Phi_i.
\end{equation}

To compute update $Q_i^{(s+1)}$ in \eqref{upd2redone} one rewrites it as
\begin{align}
    \begin{split}
 &(x_i^{\epsilon\top}(k) \otimes x_i^{\epsilon\top} (k)) \text{vec} (Q_i^{(s+1)}) = \\
 & x_i^{\epsilon\top}(k) Q_i^{(s)} x_i^{\epsilon}(k) + \alpha_i x_i^{\epsilon\top}(k)\Delta_i^{(s+1)} x_i^{\epsilon}(k),
\end{split}
\end{align}
where $\Delta_i^{(s+1)}$ is given in \eqref{DELTAredone}. Then, the following notations are used 
\begin{align}
	\begin{split}
	\eta_i (k) & =  x_i^{\epsilon\top}(k) Q_i^{(s)} x_i^{\epsilon}(k) + \alpha_i x_i^{\epsilon\top}(k) \Delta_i^{(s+1)} x_i^{\epsilon}(k), \\
	\theta_i (k)  &=  (x_i^{\epsilon\top} (k) \otimes x_i^{\epsilon\top}(k))^\top,\\
	H_i & = (\eta_i(k),\eta_i(k+1),\dots,\eta_i(k+ n_Q - 1)^\top, \\
	\Theta_i & = (\theta_i(k), \theta_i(k+1), \dots, \theta_i(k+n_Q -1))^\top. 
	\end{split} 
\end{align}
where $n_Q\geq n(n+1)/2$ because $Q_i^{(s+1)}$ is symmetric and has $n(n+1)/2$ elements to estimate.  Then, one can use the batch-least square to calculate 
\begin{equation}\label{lq2}
\text{vec}(Q_i^{(s+1)}) = (\Theta_i^\top \Theta_i)^{-1} \Theta_i^\top H_i.
\end{equation}

The procedure of the model-based algorithm is summarised below in \textbf{Algorithm~ \ref{mfalg}}.

Finally, the steps of the model-free algorithm are summarized below. 
\begin{algorithm}
\caption{Model-free Algorithm}
\label{mfalg}
    \begin{enumerate}
	\item Initialize $R_{ii} > 0$, $R_{ij} \geq 0$, $Q_i^{(0)} > 0$ and the step sizes $\alpha_i\in(0,1]$ for $i,j\in\mathcal{N}$. Set the iteration counter $s = 0$, the desired tolerance $\rho_i > 0$ and collect $N$ pairs of trajectories $(x_i^{\epsilon},u_i)$ for $i\in\mathcal{N}$.\\
	\textbf{For each player $i\in\mathcal{N}$ perform the following}:
	\item Compute $H_i^{(s+1)}$ from \eqref{upd1redone} via \eqref{lq1} and update $Q_i^{(s+1)}$ as in \eqref{upd2redone} via \eqref{lq2}.  
	\item If $\|Q_i^{(s+1)} - Q_i^{(s)}\| \leq \rho_i$ then stop and compute $K_i$ from \eqref{upd3redone}. Otherwise, set $s+1$ and repeat step 2. 
	\end{enumerate}
\end{algorithm}

\section{Simulations}\label{SIMS}
In this section, we present simulation results for the introduced algorithms. 

\subsection{Model-based algorithm simulation}
For the model-based algorithm, we consider slightly modified dynamics from \cite{song21} (the plant matrix $A$ is made unstable). Consider the following discrete-time dynamics 
\begin{equation}
x(k+1) = A x(k) + \sum_{j=1}^4 B_j u_j(k),
\end{equation}
where 
\mathleft
\begin{align}
    \begin{split}
&A = \begin{pmatrix}
1.1 & 0.09983 \\ -0.09983 & 0.995
\end{pmatrix},\, B_1 = \begin{pmatrix}0.2097 \\ 0.08984\end{pmatrix},\\ 
& B_2 = \begin{pmatrix} 0.2147 \\ 0.2895 \end{pmatrix},\, B_3 = \begin{pmatrix} 0.2097 \\ 0.1897\end{pmatrix}.\, 
B_4 = \begin{pmatrix} 0.2 \\ 0.1 \end{pmatrix}.
    \end{split}
\end{align}
The observed game $(A,\mathbf{B},\mathbf{Q}_o,\mathbf{R}_o)$ cost function parameters parameters are given below 
\mathcenter
\begin{align}
    \begin{split}
    & Q_{1,o} = \begin{pmatrix} 5 & 0\\ 0 & 7\end{pmatrix},\quad 
      Q_{2,o} = \begin{pmatrix}10 & 0\\ 0 & 3\end{pmatrix}, \\
    & Q_{3,o} = \begin{pmatrix} 3 & 0\\ 0 & 1\end{pmatrix},\quad 
      Q_{4,o} = \begin{pmatrix} 1 & 0\\ 0 & 1\end{pmatrix},
    \end{split}
\end{align}
and 
\begin{align}
\begin{split}
    & R_{11,o} =  R_{22,o} = R_{33,o} = R_{44,o} = 1, \\
    & R_{12,o} = R_{14,o} = R_{23,o} = R_{31,o} = 1,
\end{split}
\end{align}
and the rest $R_{ij} = 0$ for $i,j \in\mathcal{N}$. 

The game was solved using the algorithm presented in \cite{song21}. The resulted NE tuple $(K_{1,o}, K_{2,o}, K_{3,o}, K_{4,o})$ for the described game is given below

\begin{align}
    \begin{split}
& K_{1,o} = \begin{pmatrix} 2.2058 & -0.6285\end{pmatrix},\, K_{2,o} = \begin{pmatrix} 0.3693 & 1.1207 \end{pmatrix}, \\
& K_{3,o} = \begin{pmatrix} 0.3216 & 0.1016\end{pmatrix},\, K_{4,o} = \begin{pmatrix} 0.1883 & -0.0226 \end{pmatrix}.
    \end{split}
\end{align}

We initialize the parameters in the following way 
\begin{align}
    \begin{split}
& Q_i^{(0)} = 0.1 I_2,\quad i\in\mathcal{N}\\ 
& R_{11} = 2,\, R_{22} = 0.5,\, R_{33} = 1,\, R_{44} = 4,    
    \end{split}
\end{align}
and the rest $R_{ij} = 0$ for $i,j \in\mathcal{N}$; the step sizes and the tolerance are $\alpha_i = 1$ and $\rho_i =0.001$ for $i\in\mathcal{N}$. 

The desired tolerance was reached after $531$ iterations. The simulation results are the following
\begin{align}
\begin{split}
& P_1^{(531)} = \begin{pmatrix}
41.7888 & -8.2424 \\
   -8.2424 &  2.7042
\end{pmatrix} ,\\
& P_2^{(531)}= \begin{pmatrix}
4.9813  & 0.6768 \\
    0.6768 &  2.3762
\end{pmatrix},\\
& P_3^{(531)} = \begin{pmatrix}
3.4643  & 0.6572 \\
    0.6572  & 0.1779
\end{pmatrix},\\
&P_4^{(531)} = \begin{pmatrix} 7.3840 & -0.4517 \\
   -0.4517 &  0.0901\end{pmatrix}, 
\end{split}
\end{align}
and 
\begin{align}
\begin{split}
& Q_1^{(531)} = \begin{pmatrix}
19.2252 & -1.2966 \\
   -1.2966  & 0.2215
\end{pmatrix} ,\\
& Q_2^{(531)}= \begin{pmatrix}
3.6298 &  1.1197 \\
    1.1197 &  0.5718
\end{pmatrix},\\
& Q_3^{(531)} = \begin{pmatrix}
2.8970  & 0.5067 \\
    0.5067 &  0.0996
\end{pmatrix},\\
&Q_4^{(531)} = \begin{pmatrix}5.5080 & -0.1665\\
   -0.1665  & 0.0212\end{pmatrix}, 
\end{split}
\end{align} 
where the resulted feedback laws calculated via \eqref{upd3} are
\begin{align}
    \begin{split}
& K_1 = \begin{pmatrix} 2.1898 & -0.6298\end{pmatrix} ,\, K_2 = \begin{pmatrix}
 0.3543  & 1.1193
\end{pmatrix},\\
& K_3 = \begin{pmatrix} 0.3058 &  0.1002 \end{pmatrix} ,\,K_4 = \begin{pmatrix}
0.1731 & -0.0236
\end{pmatrix}.
    \end{split}
\end{align}
The convergence of the parameters is shown on Figure \ref{fig1}. 
\begin{figure} 
    \centering
  \subfloat[\label{1a}]{%
        \includegraphics[width=1\linewidth]{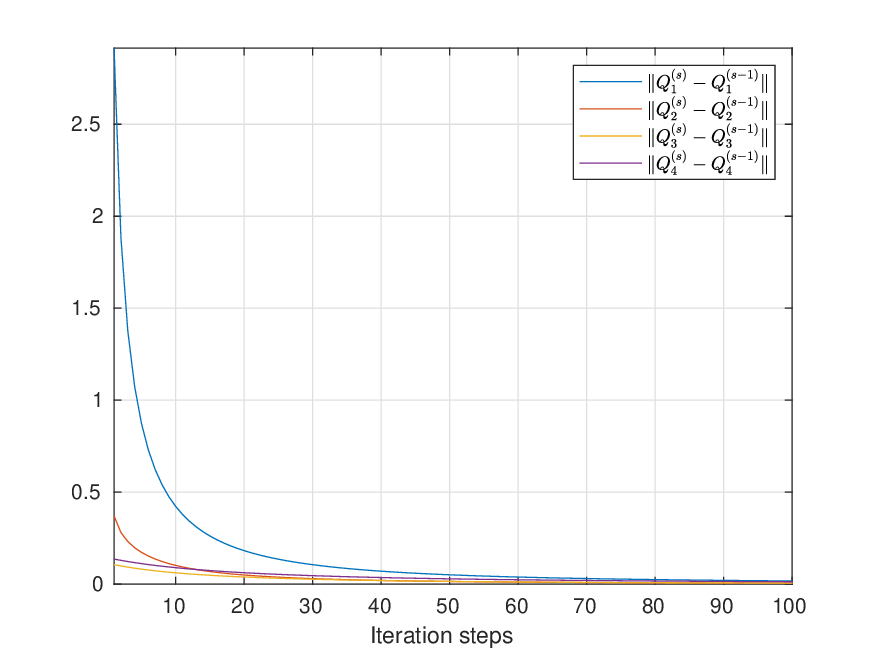}}\\
  \subfloat[\label{1b}]{%
        \includegraphics[width=1\linewidth]{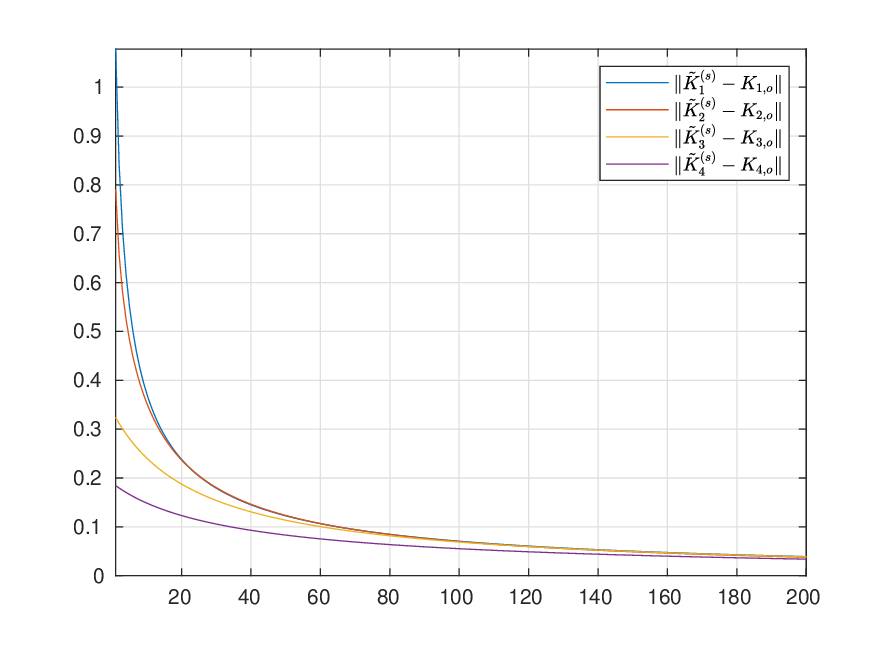}}
  \caption{Algorithm 1: (a) convergence of the norm for iterations of $Q_i^{(s)}$; (b) convergence of the norm for iterations of $\Tilde{K}_i^{(s)}$ as if it is calculated as in \eqref{ktilde}.}
  \label{fig1} 
\end{figure}
\begin{rmk}
 The reader can notice that in Figure \ref{fig1}, the iteration steps axis is shorter then the required number of iterations to reach the set tolerance, i.e., $531$. In Theorem \ref{thconv}, it is shown that the increment that one approaches the required $Q_i$ gets smaller with each iteration which results in slow convergence for a low tolerance. 

In fact, $\alpha_i $, firstly introduced in \eqref{upd21}, is designed to make every $Q_i^{(s+1)}$ to be a convex combination of $Q_i^{(s)}$ and $\Tilde{Q}_i^{(s)}$. But on the other hand, \eqref{upd2} demonstrates that with each iteration $s$, one moves closer to the required $Q_i$ with $\alpha_i \Delta_i^{(s)} >  0$ step. Thus, technically it is possible to set $\alpha_i > 1$ to improve the speed of convergence. This was checked by setting $\alpha_i = 5$ for every $i$ in the above simulation. Then, the same tolerance was reached in $260$ iterations. Obviously, the possibility of overshooting the required value for $Q_i$ then increases. 
\end{rmk}

\subsection{Model-free algorithm simulation}
Consider the following discrete-time dynamics 
\begin{equation}
x(k+1) = A x(k) + \sum_{j=1}^2 B_j u_j(k)
\end{equation}
where 
\begin{align}
    \begin{split}
       & A = \begin{pmatrix}
            0.77 & 0.36 \\ 0 & 0.85
        \end{pmatrix}, \quad B1 = \begin{pmatrix}
            0.15 \\ 0.43 
        \end{pmatrix},\quad B2 = \begin{pmatrix}
            0.17 \\ 0.31
        \end{pmatrix}.
    \end{split}
\end{align}
The observed game $(A,\mathbf{B},\mathbf{Q}_o,\mathbf{R}_o)$ cost function parameters parameters are given below 
\mathcenter
\begin{align}
    \begin{split}
    & Q_{1,o} = \begin{pmatrix} 5 & 0\\ 0 & 10\end{pmatrix},\quad 
      Q_{2,o} = 3 I_2, \\
    \end{split}
\end{align}
and 
\begin{equation}
    R_{11,o} =3,\,  R_{22,o} = 4,\, R_{12,o} = 1,\, R_{21,o} = 1.
\end{equation}

The game was solved using the algorithm presented in \cite{song21}. The resulted NE pair $(K_{1,o}, K_{2,o})$ for the described game is given below
\begin{equation}
     K_{1,o} = \begin{pmatrix}  0.1953 & 0.9638\end{pmatrix},\, K_{2,o} = \begin{pmatrix} 0.1839 & 0.2254 \end{pmatrix}.
\end{equation}

We initialize the parameters in the following way 
\begin{equation}
    Q_i^{(0)} = 0.1 I_2,\quad R_{ij} = 1,\quad i,j\in\mathcal{N}.
\end{equation}
The step sizes and the tolerance are $\alpha_i = 1$ and $\rho_i =0.001$ for $i\in\mathcal{N}$. 
\begin{rmk}
    For the model-free algorithm, trajectories must be generated with control inputs including the probing noise. For this simulation,
    $u_i (k) = -K_{i,o} x(k) + \epsilon_i(k)$ control inputs were used where $\epsilon_i(k) = 0.00005\sum_{i=1}^{10^4} \sin(\omega_i k)$ where $\omega_i$ is a scalar drawn from the standard normal distribution. Obviously, there might be a simpler choice for the probing noise. We refer the reader to \cite{ioannou2006adaptive} for more details.
\end{rmk}

The desired tolerance was reached after $183$ iterations. The simulation results are the following
\begin{align}
\begin{split}
& H_1^{(183)} = \begin{pmatrix}
0.7478 &  1.7403 & 0.3901 \\
    1.7403  &  6.6114  &  1.9877 \\
    0.3901  & 1.9877  & 2.1069
\end{pmatrix} ,\\
& H_2^{(183)}= \begin{pmatrix}
1.0220 &  1.1011 &  0.4393 \\
    1.1011 &  2.6281 &  0.5886 \\
    0.4393 &  0.5886 &  2.5689 \\
\end{pmatrix},\\
\end{split}
\end{align}
and 
\begin{align}
\begin{split}
& Q_1^{(183)} = \begin{pmatrix}
  0.4444 &  0.9810 \\
    0.9810  &   2.9255
\end{pmatrix} ,\\
& Q_2^{(183)}= \begin{pmatrix}
0.5495  &  0.5454 \\
    0.5454  & 1.0222
\end{pmatrix},\\
\end{split}
\end{align} 
where the resulted feedback laws calculated via \eqref{upd3redone} are
\begin{equation}
K_1 = \begin{pmatrix}  0.1851 & 0.9434 \end{pmatrix} ,\, K_2 = \begin{pmatrix}
  0.1710 & 0.2291
\end{pmatrix}.
\end{equation}
The convergence of the parameters is shown on Figure \ref{fig2}. 
\begin{figure} 
    \centering

       \includegraphics[width=1\linewidth]{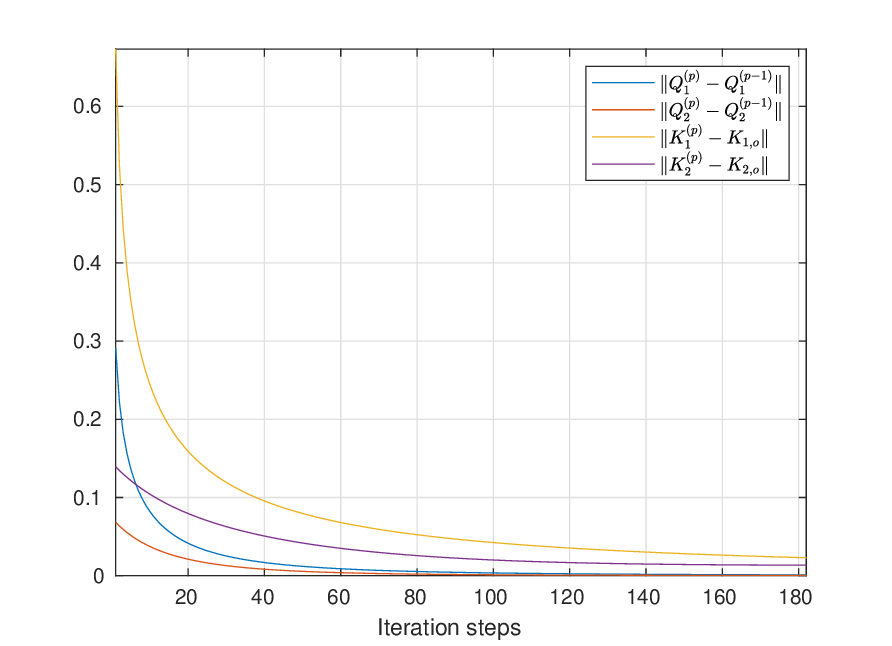}\\

  \caption{Algorithm 2: convergence of the norm for iterations of $Q_i^{(s)}$ and $\Tilde{K}_i^{(s)}$ as if it is calculated as in \eqref{upd3redone}.}
  \label{fig2} 
\end{figure}

\section{Conclusions}\label{CON}
In this paper, two algorithms to solve the inverse problem in the case of LQ discrete-time dynamic non-cooperative games are established. Firstly, we introduced the model-based algorithm and described its analytical properties. Then, the model-based algorithm is further extended to the model-free version that can solve the inverse problem in the case when the dynamics of the systems is unknown. It is shown that both the algorithms generate a set of cost function parameters that form an equivalent game. It is also shown how a new equivalent game can be generated without requiring to reuse the algorithms.

In our work we considered the closed-loop infinite horizon games with linear dynamics. Thus, finite horizon games, games with non-linear dynamics and open loop games  might be interesting to study in the context of the inverse problems. Another possible direction might be to extend the presented algorithms or introduce new ones that guarantee that the set of feedback laws of the players are stabilizing the system at each iteration. This might be useful for solving imitation problems \cite{lian2022data} where the learner imitating the expert wants to use the stabilizable set of controllers at every iteration.




\bibliographystyle{cas-model2-names}

\bibliography{cas-dc}

\begin{thebibliography}{37}
\expandafter\ifx\csname natexlab\endcsname\relax\def\natexlab#1{#1}\fi
\providecommand{\url}[1]{\texttt{#1}}
\providecommand{\href}[2]{#2}
\providecommand{\path}[1]{#1}
\providecommand{\DOIprefix}{doi:}
\providecommand{\ArXivprefix}{arXiv:}
\providecommand{\URLprefix}{URL: }
\providecommand{\Pubmedprefix}{pmid:}
\providecommand{\doi}[1]{\href{http://dx.doi.org/#1}{\path{#1}}}
\providecommand{\Pubmed}[1]{\href{pmid:#1}{\path{#1}}}
\providecommand{\bibinfo}[2]{#2}
\ifx\xfnm\relax \def\xfnm[#1]{\unskip,\space#1}\fi
\bibitem[{Al-Tamimi et~al.(2007)Al-Tamimi, Lewis and Abu-Khalaf}]{al07}
\bibinfo{author}{Al-Tamimi, A.}, \bibinfo{author}{Lewis, F.L.}, \bibinfo{author}{Abu-Khalaf, M.}, \bibinfo{year}{2007}.
\newblock \bibinfo{title}{Model-free q-learning designs for linear discrete-time zero-sum games with application to h-infinity control}.
\newblock \bibinfo{journal}{Automatica} \bibinfo{volume}{43}, \bibinfo{pages}{473--481}.
\bibitem[{Anderson(1966)}]{anderson1966inverse}
\bibinfo{author}{Anderson, B.D.}, \bibinfo{year}{1966}.
\newblock \bibinfo{title}{The inverse problem of optimal control}. volume~\bibinfo{volume}{38}.
\newblock \bibinfo{publisher}{Stanford Electronics Laboratories, Stanford University}.
\bibitem[{Ba{\c{s}}ar and Olsder(1998)}]{bacsar1998dynamic}
\bibinfo{author}{Ba{\c{s}}ar, T.}, \bibinfo{author}{Olsder, G.J.}, \bibinfo{year}{1998}.
\newblock \bibinfo{title}{Dynamic noncooperative game theory}.
\newblock \bibinfo{publisher}{SIAM}.
\bibitem[{Devore(2008)}]{devore2008probability}
\bibinfo{author}{Devore, J.L.}, \bibinfo{year}{2008}.
\newblock \bibinfo{title}{Probability and Statistics for Engineering and the Sciences}.
\newblock \bibinfo{publisher}{Spinger}.
\bibitem[{Engwerda(2005)}]{engwerda_lq_2005}
\bibinfo{author}{Engwerda, J.}, \bibinfo{year}{2005}.
\newblock \bibinfo{title}{LQ Dynamic Optimization and Differential Games}.
\newblock \bibinfo{publisher}{John Wiley \& Sons}.
\bibitem[{Flad et~al.(2017)Flad, Fr{\"o}hlich and Hohmann}]{flad2017cooperative}
\bibinfo{author}{Flad, M.}, \bibinfo{author}{Fr{\"o}hlich, L.}, \bibinfo{author}{Hohmann, S.}, \bibinfo{year}{2017}.
\newblock \bibinfo{title}{Cooperative shared control driver assistance systems based on motion primitives and differential games}.
\newblock \bibinfo{journal}{IEEE Transactions on Human-Machine Systems} \bibinfo{volume}{47}, \bibinfo{pages}{711--722}.
\bibitem[{Gu(2007)}]{gu2007differential}
\bibinfo{author}{Gu, D.}, \bibinfo{year}{2007}.
\newblock \bibinfo{title}{A differential game approach to formation control}.
\newblock \bibinfo{journal}{IEEE Transactions on Control Systems Technology} \bibinfo{volume}{16}, \bibinfo{pages}{85--93}.
\bibitem[{Haddad and Chellaboina(2008)}]{had08}
\bibinfo{author}{Haddad, W.M.}, \bibinfo{author}{Chellaboina, V.}, \bibinfo{year}{2008}.
\newblock \bibinfo{title}{Nonlinear dynamical systems and control: a Lyapunov-based approach}.
\newblock \bibinfo{publisher}{Princeton university press}.
\bibitem[{Inga et~al.(2019)Inga, Bischoff, Molloy, Flad and Hohmann}]{inga2019solution}
\bibinfo{author}{Inga, J.}, \bibinfo{author}{Bischoff, E.}, \bibinfo{author}{Molloy, T.L.}, \bibinfo{author}{Flad, M.}, \bibinfo{author}{Hohmann, S.}, \bibinfo{year}{2019}.
\newblock \bibinfo{title}{Solution sets for inverse non-cooperative linear-quadratic differential games}.
\newblock \bibinfo{journal}{IEEE Control Systems Letters} \bibinfo{volume}{3}, \bibinfo{pages}{871--876}.
\bibitem[{Ioannou and Fidan(2006)}]{ioannou2006adaptive}
\bibinfo{author}{Ioannou, P.}, \bibinfo{author}{Fidan, B.}, \bibinfo{year}{2006}.
\newblock \bibinfo{title}{Adaptive control tutorial}.
\newblock \bibinfo{publisher}{SIAM}.
\bibitem[{Isaacs(1965)}]{isaacs_differential_1965}
\bibinfo{author}{Isaacs, R.}, \bibinfo{year}{1965}.
\newblock \bibinfo{title}{Differential Games: A Mathematical Theory with Applications to Warfare and Pursuit, Control and Optimization}.
\newblock \bibinfo{publisher}{Wiley}.
\bibitem[{Jiang and Jiang(2012)}]{jiang12}
\bibinfo{author}{Jiang, Y.}, \bibinfo{author}{Jiang, Z.P.}, \bibinfo{year}{2012}.
\newblock \bibinfo{title}{Computational adaptive optimal control for continuous-time linear systems with completely unknown dynamics}.
\newblock \bibinfo{journal}{Automatica} \bibinfo{volume}{48}, \bibinfo{pages}{2699--2704}.
\bibitem[{Kiumarsi et~al.(2017)Kiumarsi, Lewis and Jiang}]{kiumarsi17}
\bibinfo{author}{Kiumarsi, B.}, \bibinfo{author}{Lewis, F.L.}, \bibinfo{author}{Jiang, Z.P.}, \bibinfo{year}{2017}.
\newblock \bibinfo{title}{H-infinity control of linear discrete-time systems: Off-policy reinforcement learning}.
\newblock \bibinfo{journal}{Automatica} \bibinfo{volume}{78}, \bibinfo{pages}{144--152}.
\bibitem[{Kiumarsi et~al.(2014)Kiumarsi, Lewis, Modares, Karimpour and Naghibi-Sistani}]{kimurasi14}
\bibinfo{author}{Kiumarsi, B.}, \bibinfo{author}{Lewis, F.L.}, \bibinfo{author}{Modares, H.}, \bibinfo{author}{Karimpour, A.}, \bibinfo{author}{Naghibi-Sistani, M.B.}, \bibinfo{year}{2014}.
\newblock \bibinfo{title}{Reinforcement q-learning for optimal tracking control of linear discrete-time systems with unknown dynamics}.
\newblock \bibinfo{journal}{Automatica} \bibinfo{volume}{50}, \bibinfo{pages}{1167--1175}.
\bibitem[{K{\"o}pf et~al.(2017)K{\"o}pf, Inga, Rothfu{\ss}, Flad and Hohmann}]{kopf2017inverse}
\bibinfo{author}{K{\"o}pf, F.}, \bibinfo{author}{Inga, J.}, \bibinfo{author}{Rothfu{\ss}, S.}, \bibinfo{author}{Flad, M.}, \bibinfo{author}{Hohmann, S.}, \bibinfo{year}{2017}.
\newblock \bibinfo{title}{Inverse reinforcement learning for identification in linear-quadratic dynamic games}.
\newblock \bibinfo{journal}{IFAC-PapersOnLine} \bibinfo{volume}{50}, \bibinfo{pages}{14902--14908}.
\bibitem[{Lancaster and Rodman(1995)}]{lancaster1995algebraic}
\bibinfo{author}{Lancaster, P.}, \bibinfo{author}{Rodman, L.}, \bibinfo{year}{1995}.
\newblock \bibinfo{title}{Algebraic Riccati Equations}.
\newblock \bibinfo{publisher}{Clarendon Press}, \bibinfo{address}{Oxford, U.K.}
\bibitem[{Lewis et~al.(2012)Lewis, Vrabie and Syrmos}]{Lewis2012}
\bibinfo{author}{Lewis, F.L.}, \bibinfo{author}{Vrabie, D.}, \bibinfo{author}{Syrmos, V.L.}, \bibinfo{year}{2012}.
\newblock \bibinfo{title}{Optimal control}.
\newblock \bibinfo{publisher}{John Wiley \& Sons}.
\bibitem[{Lian et~al.(2022a)Lian, Donge, Lewis, Chai and Davoudi}]{lian2022data}
\bibinfo{author}{Lian, B.}, \bibinfo{author}{Donge, V.S.}, \bibinfo{author}{Lewis, F.L.}, \bibinfo{author}{Chai, T.}, \bibinfo{author}{Davoudi, A.}, \bibinfo{year}{2022}a.
\newblock \bibinfo{title}{Data-driven inverse reinforcement learning control for linear multiplayer games}.
\newblock \bibinfo{journal}{IEEE Transactions on Neural Networks and Learning Systems} .
\bibitem[{Lian et~al.(2021)Lian, Xue, Lewis and Chai}]{lian2021inverse}
\bibinfo{author}{Lian, B.}, \bibinfo{author}{Xue, W.}, \bibinfo{author}{Lewis, F.L.}, \bibinfo{author}{Chai, T.}, \bibinfo{year}{2021}.
\newblock \bibinfo{title}{Inverse reinforcement learning for adversarial apprentice games}.
\newblock \bibinfo{journal}{IEEE Transactions on Neural Networks and Learning Systems} .
\bibitem[{Lian et~al.(2022b)Lian, Xue, Lewis and Chai}]{lian2022inverse}
\bibinfo{author}{Lian, B.}, \bibinfo{author}{Xue, W.}, \bibinfo{author}{Lewis, F.L.}, \bibinfo{author}{Chai, T.}, \bibinfo{year}{2022}b.
\newblock \bibinfo{title}{Inverse reinforcement learning for multi-player noncooperative apprentice games}.
\newblock \bibinfo{journal}{Automatica} \bibinfo{volume}{145}, \bibinfo{pages}{110524}.
\bibitem[{Lian et~al.(2023)Lian, Xue, Xie, Lewis and Davoudi}]{lian2023off}
\bibinfo{author}{Lian, B.}, \bibinfo{author}{Xue, W.}, \bibinfo{author}{Xie, Y.}, \bibinfo{author}{Lewis, F.L.}, \bibinfo{author}{Davoudi, A.}, \bibinfo{year}{2023}.
\newblock \bibinfo{title}{Off-policy inverse q-learning for discrete-time antagonistic unknown systems}.
\newblock \bibinfo{journal}{Automatica} \bibinfo{volume}{155}, \bibinfo{pages}{111171}.
\bibitem[{Mageirou(1976)}]{mage76}
\bibinfo{author}{Mageirou, E.}, \bibinfo{year}{1976}.
\newblock \bibinfo{title}{Values and strategies for infinite time linear quadratic games}.
\newblock \bibinfo{journal}{IEEE Transactions on Automatic Control} \bibinfo{volume}{21}, \bibinfo{pages}{547--550}.
\bibitem[{Molloy et~al.(2017)Molloy, Ford and Perez}]{molloy2017inverse}
\bibinfo{author}{Molloy, T.L.}, \bibinfo{author}{Ford, J.J.}, \bibinfo{author}{Perez, T.}, \bibinfo{year}{2017}.
\newblock \bibinfo{title}{Inverse noncooperative dynamic games}.
\newblock \bibinfo{journal}{IFAC-PapersOnLine} \bibinfo{volume}{50}, \bibinfo{pages}{11788--11793}.
\bibitem[{Molloy et~al.(2018)Molloy, Garden, Perez, Schiffner, Karmaker and Srinivasan}]{molloy2018inverse}
\bibinfo{author}{Molloy, T.L.}, \bibinfo{author}{Garden, G.S.}, \bibinfo{author}{Perez, T.}, \bibinfo{author}{Schiffner, I.}, \bibinfo{author}{Karmaker, D.}, \bibinfo{author}{Srinivasan, M.V.}, \bibinfo{year}{2018}.
\newblock \bibinfo{title}{An inverse differential game approach to modelling bird mid-air collision avoidance behaviours}.
\newblock \bibinfo{journal}{IFAC-PapersOnLine} \bibinfo{volume}{51}, \bibinfo{pages}{754--759}.
\bibitem[{Molloy et~al.(2019)Molloy, Inga, Flad, Ford, Perez and Hohmann}]{molloy2019inverse}
\bibinfo{author}{Molloy, T.L.}, \bibinfo{author}{Inga, J.}, \bibinfo{author}{Flad, M.}, \bibinfo{author}{Ford, J.J.}, \bibinfo{author}{Perez, T.}, \bibinfo{author}{Hohmann, S.}, \bibinfo{year}{2019}.
\newblock \bibinfo{title}{Inverse open-loop noncooperative differential games and inverse optimal control}.
\newblock \bibinfo{journal}{IEEE Transactions on Automatic Control} \bibinfo{volume}{65}, \bibinfo{pages}{897--904}.
\bibitem[{Monti et~al.(2023)Monti, Nortmann, Mylvaganam and Sassano}]{monti2023feedback}
\bibinfo{author}{Monti, A.}, \bibinfo{author}{Nortmann, B.}, \bibinfo{author}{Mylvaganam, T.}, \bibinfo{author}{Sassano, M.}, \bibinfo{year}{2023}.
\newblock \bibinfo{title}{Feedback and open-loop nash equilibria for lq infinite-horizon discrete-time dynamic games}.
\newblock \bibinfo{journal}{arXiv preprint arXiv:2307.14898} .
\bibitem[{Mylvaganam et~al.(2017)Mylvaganam, Sassano and Astolfi}]{mylvaganam2017differential}
\bibinfo{author}{Mylvaganam, T.}, \bibinfo{author}{Sassano, M.}, \bibinfo{author}{Astolfi, A.}, \bibinfo{year}{2017}.
\newblock \bibinfo{title}{A differential game approach to multi-agent collision avoidance}.
\newblock \bibinfo{journal}{IEEE Transactions on Automatic Control} \bibinfo{volume}{62}, \bibinfo{pages}{4229--4235}.
\bibitem[{Na and Cole(2014)}]{na2014game}
\bibinfo{author}{Na, X.}, \bibinfo{author}{Cole, D.J.}, \bibinfo{year}{2014}.
\newblock \bibinfo{title}{Game-theoretic modeling of the steering interaction between a human driver and a vehicle collision avoidance controller}.
\newblock \bibinfo{journal}{IEEE Transactions on Human-Machine Systems} \bibinfo{volume}{45}, \bibinfo{pages}{25--38}.
\bibitem[{Ng et~al.(2000)Ng, Russell et~al.}]{ng2000algorithms}
\bibinfo{author}{Ng, A.Y.}, \bibinfo{author}{Russell, S.}, et~al., \bibinfo{year}{2000}.
\newblock \bibinfo{title}{Algorithms for inverse reinforcement learning.}, in: \bibinfo{booktitle}{Icml}, p.~\bibinfo{pages}{2}.
\bibitem[{Rothfu{\ss} et~al.(2017)Rothfu{\ss}, Inga, K{\"o}pf, Flad and Hohmann}]{rothfuss2017inverse}
\bibinfo{author}{Rothfu{\ss}, S.}, \bibinfo{author}{Inga, J.}, \bibinfo{author}{K{\"o}pf, F.}, \bibinfo{author}{Flad, M.}, \bibinfo{author}{Hohmann, S.}, \bibinfo{year}{2017}.
\newblock \bibinfo{title}{Inverse optimal control for identification in non-cooperative differential games}.
\newblock \bibinfo{journal}{IFAC-PapersOnLine} \bibinfo{volume}{50}, \bibinfo{pages}{14909--14915}.
\bibitem[{Song et~al.(2019)Song, Wei, Zhang and Lewis}]{song21}
\bibinfo{author}{Song, R.}, \bibinfo{author}{Wei, Q.}, \bibinfo{author}{Zhang, H.}, \bibinfo{author}{Lewis, F.L.}, \bibinfo{year}{2019}.
\newblock \bibinfo{title}{Discrete-time non-zero-sum games with completely unknown dynamics}.
\newblock \bibinfo{journal}{IEEE Transactions on Cybernetics} \bibinfo{volume}{51}, \bibinfo{pages}{2929--2943}.
\bibitem[{Vamvoudakis(2015)}]{vamvs15}
\bibinfo{author}{Vamvoudakis, K.G.}, \bibinfo{year}{2015}.
\newblock \bibinfo{title}{Non-zero sum nash q-learning for unknown deterministic continuous-time linear systems}.
\newblock \bibinfo{journal}{Automatica} \bibinfo{volume}{61}, \bibinfo{pages}{274--281}.
\bibitem[{Vrabie et~al.(2013)Vrabie, Vamvoudakis and Lewis}]{Vrabie2012}
\bibinfo{author}{Vrabie, D.L.}, \bibinfo{author}{Vamvoudakis, K.G.}, \bibinfo{author}{Lewis, F.L.}, \bibinfo{year}{2013}.
\newblock \bibinfo{title}{Optimal adaptive control and differential games by reinforcement learning principles}.
\bibitem[{Xue et~al.(2021a)Xue, Kolaric, Fan, Lian, Chai and Lewis}]{xue2021inversec}
\bibinfo{author}{Xue, W.}, \bibinfo{author}{Kolaric, P.}, \bibinfo{author}{Fan, J.}, \bibinfo{author}{Lian, B.}, \bibinfo{author}{Chai, T.}, \bibinfo{author}{Lewis, F.L.}, \bibinfo{year}{2021}a.
\newblock \bibinfo{title}{Inverse reinforcement learning in tracking control based on inverse optimal control}.
\newblock \bibinfo{journal}{IEEE Transactions on Cybernetics} \bibinfo{volume}{52}, \bibinfo{pages}{10570--10581}.
\bibitem[{Xue et~al.(2023)Xue, Lian, Fan, Chai and Lewis}]{xue2023inverse}
\bibinfo{author}{Xue, W.}, \bibinfo{author}{Lian, B.}, \bibinfo{author}{Fan, J.}, \bibinfo{author}{Chai, T.}, \bibinfo{author}{Lewis, F.L.}, \bibinfo{year}{2023}.
\newblock \bibinfo{title}{Inverse reinforcement learning for trajectory imitation using static output feedback control}.
\newblock \bibinfo{journal}{IEEE Transactions on Cybernetics} .
\bibitem[{Xue et~al.(2021b)Xue, Lian, Fan, Kolaric, Chai and Lewis}]{xue2021inverse}
\bibinfo{author}{Xue, W.}, \bibinfo{author}{Lian, B.}, \bibinfo{author}{Fan, J.}, \bibinfo{author}{Kolaric, P.}, \bibinfo{author}{Chai, T.}, \bibinfo{author}{Lewis, F.L.}, \bibinfo{year}{2021}b.
\newblock \bibinfo{title}{Inverse reinforcement q-learning through expert imitation for discrete-time systems}.
\newblock \bibinfo{journal}{IEEE Transactions on Neural Networks and Learning Systems} .
\bibitem[{Ziebart et~al.(2008)Ziebart, Maas, Bagnell, Dey et~al.}]{ziebart2008maximum}
\bibinfo{author}{Ziebart, B.D.}, \bibinfo{author}{Maas, A.L.}, \bibinfo{author}{Bagnell, J.A.}, \bibinfo{author}{Dey, A.K.}, et~al., \bibinfo{year}{2008}.
\newblock \bibinfo{title}{Maximum entropy inverse reinforcement learning.}, in: \bibinfo{booktitle}{Aaai}, \bibinfo{organization}{Chicago, IL, USA}. pp. \bibinfo{pages}{1433--1438}.

\end{thebibliography}


\bio{}
\endbio

\endbio

\end{document}